\documentclass[12pt]{amsart}
\headheight=8pt     \topmargin=0pt \textheight=636pt
\textwidth=450pt \oddsidemargin=10pt \evensidemargin=10pt
\usepackage{mathrsfs}
\usepackage{amssymb}
\usepackage{verbatim}
\usepackage{hyperref}
\usepackage{color}
\usepackage{amsfonts}
\usepackage{mathrsfs}
\usepackage{amsmath}
\usepackage{amssymb}
\usepackage{subeqnarray}
\usepackage{cases}

\begin{document}
\newtheorem{theorem}{Theorem}
\newtheorem{proposition}[theorem]{Proposition}
\newtheorem{assumption}[theorem]{Assumption}
\newtheorem{conjecture}[theorem]{Conjecture}
\def\theconjecture{\unskip}
\newtheorem{corollary}[theorem]{Corollary}
\newtheorem{lemma}[theorem]{Lemma}
\newtheorem{sublemma}[theorem]{Sublemma}
\newtheorem{observation}[theorem]{Observation}
\theoremstyle{definition}
\newtheorem{definition}{Definition}
\newtheorem{notation}[definition]{Notation}
\newtheorem{remark}[definition]{Remark}
\newtheorem{question}[definition]{Question}
\newtheorem{questions}[definition]{Questions}
\newtheorem{example}[definition]{Example}
\newtheorem{problem}[definition]{Problem}
\newtheorem{exercise}[definition]{Exercise}

\numberwithin{theorem}{section}
\numberwithin{definition}{section}
\numberwithin{equation}{section}

\def\earrow{{\mathbf e}}
\def\rarrow{{\mathbf r}}
\def\uarrow{{\mathbf u}}
\def\varrow{{\mathbf V}}
\def\tpar{T_{\rm par}}
\def\apar{A_{\rm par}}

\def\reals{{\mathbb R}}
\def\torus{{\mathbb T}}
\def\heis{{\mathbb H}}
\def\integers{{\mathbb Z}}
\def\naturals{{\mathbb N}}
\def\complex{{\mathbb C}\/}
\def\distance{\operatorname{distance}\,}
\def\support{\operatorname{support}\,}
\def\dist{\operatorname{dist}\,}
\def\Span{\operatorname{span}\,}
\def\degree{\operatorname{degree}\,}
\def\kernel{\operatorname{kernel}\,}
\def\dim{\operatorname{dim}\,}
\def\codim{\operatorname{codim}}
\def\trace{\operatorname{trace\,}}
\def\Span{\operatorname{span}\,}
\def\dimension{\operatorname{dimension}\,}
\def\codimension{\operatorname{codimension}\,}
\def\nullspace{\scriptk}
\def\kernel{\operatorname{Ker}}
\def\ZZ{ {\mathbb Z} }
\def\p{\partial}
\def\rp{{ ^{-1} }}
\def\Re{\operatorname{Re\,} }
\def\Im{\operatorname{Im\,} }
\def\ov{\overline}
\def\eps{\varepsilon}
\def\lt{L^2}
\def\diver{\operatorname{div}}
\def\curl{\operatorname{curl}}
\def\etta{\eta}
\newcommand{\norm}[1]{ \|  #1 \|}
\def\expect{\mathbb E}
\def\bull{$\bullet$\ }

\def\xone{x_1}
\def\xtwo{x_2}
\def\xq{x_2+x_1^2}
\newcommand{\abr}[1]{ \langle  #1 \rangle}

\newcommand{\Norm}[1]{ \left\|  #1 \right\| }
\newcommand{\set}[1]{ \left\{ #1 \right\} }
\def\one{\mathbf 1}
\def\whole{\mathbf V}
\newcommand{\modulo}[2]{[#1]_{#2}}
\def \essinf{\mathop{\rm essinf}}
\def\scriptf{{\mathcal F}}
\def\scriptg{{\mathcal G}}
\def\scriptm{{\mathcal M}}
\def\scriptb{{\mathcal B}}
\def\scriptc{{\mathcal C}}
\def\scriptt{{\mathcal T}}
\def\scripti{{\mathcal I}}
\def\scripte{{\mathcal E}}
\def\scriptv{{\mathcal V}}
\def\scriptw{{\mathcal W}}
\def\scriptu{{\mathcal U}}
\def\scriptS{{\mathcal S}}
\def\scripta{{\mathcal A}}
\def\scriptr{{\mathcal R}}
\def\scripto{{\mathcal O}}
\def\scripth{{\mathcal H}}
\def\scriptd{{\mathcal D}}
\def\scriptl{{\mathcal L}}
\def\scriptn{{\mathcal N}}
\def\scriptp{{\mathcal P}}
\def\scriptk{{\mathcal K}}
\def\frakv{{\mathfrak V}}
\def\C{\mathbb{C}}
\def\D{\mathcal{D}}
\def\Dn{\mathcal{D}_n}
\def\Dm{\mathcal{D}_m}
\def\R{\mathbb{R}}
\def\Rn{{\mathbb{R}^n}}
\def\Rm{{\mathbb{R}^m}}
\def\Sn{{{S}^{n-1}}}
\def\M{\mathbb{M}}
\def\N{\mathbb{N}}
\def\Q{{\mathbb{Q}}}
\def\Z{\mathbb{Z}}
\def\F{\mathcal{F}}
\def\L{\mathcal{L}}
\def\S{\mathcal{S}}
\def\supp{\operatorname{supp}}
\def\dist{\operatorname{dist}}
\def\essi{\operatornamewithlimits{ess\,inf}}
\def\esss{\operatornamewithlimits{ess\,sup}}
\author{Mingming Cao}
\address{Mingming Cao \\
         School of Mathematical Sciences \\
         Beijing Normal University \\
         Laboratory of Mathematics and Complex Systems \\
         Ministry of Education \\
         Beijing 100875 \\
         People's Republic of China}
\email{m.cao@mail.bnu.edu.cn}

\author{Qingying Xue}
\address{Qingying Xue\\
        School of Mathematical Sciences\\
        Beijing Normal University \\
        Laboratory of Mathematics and Complex Systems\\
        Ministry of Education\\
        Beijing 100875\\
        People's Republic of China}
\email{qyxue@bnu.edu.cn}

\thanks{The second author was supported partly by NSFC
(No. 11471041), the Fundamental Research Funds for the Central Universities (No. 2014KJJCA10) and NCET-13-0065.\\ \indent
Corresponding
author: Qingying Xue\indent Email: qyxue@bnu.edu.cn}

\keywords{Bi-parameter; Square function; Probabilistic method; $b$-adapted Haar function.}

\date{April 23, 2016.}
\title[Bi-parameter Square Function]{Non-homogeneous $Tb$ Theorem for Bi-parameter $g$-Function}
\maketitle

\begin{abstract}
The main result of this paper is a bi-parameter $Tb$ theorem for Littlewood-Paley $g$-function, where $b$ is a tensor product of two pseudo-accretive functions. Instead of the doubling measure, we work with a product measure $\mu = \mu_n \times \mu_m$, where the measures $\mu_n$ and $\mu_m$ are only assumed to be upper doubling. The main techniques of the proof include a bi-parameter $b$-adapted Haar function decomposition and an averaging identity over good double Whitney regions. Moreover, the non-homogeneous analysis and probabilistic methods are used again.
\end{abstract}
\section{Introduction}
It is well-known that the multi-parameter harmonic analysis originated in the work of Fefferman and Stein \cite{F-S}, where the bi-parameter singular integral operators of convolution type are carefully considered. Before long, Journ\'{e} \cite{J} proved the first multi-parameter $T1$ theorem for product spaces by treating the singular integral operator as a vector-valued one-parameter operator. Recently, a new type of $T1$ theorem on product spaces was formulated by Pott and Villarroya \cite{PV}. The authors avoided the vector-valued assumptions by the mixed type conditions including kernel estimates, BMO, and weak boundedness property. Along this way, Martikainen \cite{M2012} gave a bi-parameter representation of singular integrals by dyadic shifts, which extended the famous one-parameter case of Hyt\"{o}nen \cite{H}. Moreover, by means of probabilistic methods and the techniques of dyadic analysis, Hyt\"{o}nen and Martikainen \cite{HM} showed a bi-parameter $T1$ theorem in spaces of non-homogeneous type. Inspired by this, Ou \cite{Ou-1} obtained a bi-parameter $Tb$ theorem on product Lebesgue spaces, where $b$ is a tensor product of two pseudo-accretive functions. Still more recently, a bi-parameter $T1$ theorem for bi-parameter $g$-function was established by Martikainen \cite{M2014}, although the assumptions imposed on the non-convolution kernels seem to be somewhat complicated. The proof was based on modern dyadic probabilistic techniques adapted to the bi-parameter situation.

This paper is devoted to study the non-homogeneous $Tb$ theorem for bi-parameter Littlewood-Paley $g$-function, which is defined by
\begin{align*}
g(f)(x)
&:= \bigg(\int_{0}^{\infty} \int_{0}^{\infty} |\Theta_{t_1,t_2} f(x_1,x_2)|^2 \frac{dt_1}{t_1} \frac{dt_2}{t_2} \bigg)^{1/2},\ x=(x_1,x_2) \in \R^{n+m},
\end{align*}
where the linear term $\Theta_{t_1,t_2}$ is defined by
$$
\Theta_{t_1,t_2} f(x_1,x_2) = \iint_{\R^{n+m}} K_{t_1,t_2}(x_1,x_2,y_1,y_2)f(y_1,y_2) d\mu_n(y_1) \ d\mu_m(y_2),\ \ t_1,t_2 > 0.
$$
More specifically, we will prove $L^2(\mu)$ boundedness of bi-parameter $g$-function on the product space $\R^{m+n} = \Rn \times \Rm$ equipped with a product measure
$\mu = \mu_n \times \mu_m$, where the measures $\mu_n$ and $\mu_m$ are only assumed to be upper doubling.
We also investigate $Tb$ theorem for bi-parameter $g$-function in this article. In other words, to obtain the $L^2(\mu)$ boundedness criterion for it, we will seek the conditions that the function $b$ satisfies. Indeed, we give a sufficient and necessary condition as follows.
\begin{definition}[Bi-parameter Carleson condition]
Let $\D=\Dn \times \Dm$, where $\mathcal{D}_n$ is a dyadic grid on $\Rn$ and $\Dm$ is a dyadic grid on $\R^m$.
Let $W_I = I \times (\ell(I)/2, \ell(I)]$ be Whitney region associated with $I \in \Dn$. Denote
\begin{align*}
C_{I J}^{b} &= \iint_{W_J} \iint_{W_I} |\Theta_{t_1,t_2}b(x_1, x_2)|^2 d\mu_n(x_1)\frac{dt_1}{t_1} d\mu_m(x_2) \frac{dt_2}{t_2} .
\end{align*}
We say $b$ satisfies the $bi$-$parameter \ Carleson \ condition$: For every $\D$ there holds that
\begin{equation}\label{Car-condition}
\sum_{\substack{I \times J \in \D \\ I \times J \subset \Omega}} C_{I J}^{b} \lesssim \mu(\Omega)
\end{equation}
for all sets $\Omega \subset \R^{n+m}$ such that $\mu(\Omega) < \infty$ and such that for every $x \in \Omega$ there exists
$I \times J \in \D$ so that $x \in I \times J \subset \Omega$.
\end{definition}
Although the testing condition in one-parameter setting \cite{MM} is weaker than the bi-parameter Carleson condition, the latter is convenient to deal with our paraproduct estimate.
Additionally, the necessity of it ensures the reasonableness of this formulation.

Compare with classical methods, whether in the one-parameter case or in the multi-parameter case, the dyadic probabilistic techniques is more powerful. The details are exposed to the recent developments, such as \cite{CLX}, \cite{CX}, \cite{LL}, \cite{M2012}, \cite{Ou-1}, \cite{Ou-2} and \cite{OPS}. It is not only more natural to split the summations or integral regions, but also easier to calculate. Furthermore, together with non-homogeneous analysis pioneered by Nazarov, Treil and Volberg \cite{NTV2003}, the probabilistic methods enable us to improve some doubling theories to the non-doubling situation. We will continue to adopt these techniques to our bi-parameter $g$-function. In addition, we need the bi-parameter $b$-adapted Haar functions. In the one-parameter setting, they were introduced by Hyt\"{o}nen \cite{H-vector}.

\section{Definitions and the main theorem}
In this section, we will introduce the definitions and framework which are necessary for the formulation of our main theorem. First, we consider the following class of measures.
\begin{definition}
\textbf{(Upper doubling measures).}
Let $\lambda : \Rn \times (0,\infty) \rightarrow (0,\infty)$ be a function so that $r \mapsto \lambda(x,r)$ is non-decreasing and
$\lambda(x,2r) \leq C_{\lambda} \lambda(x,r)$ for all $x \in \Rn$ and $r>0$. We say that a Borel measure $\mu$ in $\Rn$ is upper
doubling with the dominating function $\lambda$, if $\mu(B(x,r)) \leq \lambda(x,r)$ for all $x \in \Rn$ and $r > 0$.
We set $d_{\lambda} = \log_2 C_{\lambda}$.
\end{definition}

The property $\lambda(x, |x - y|) \simeq \lambda(y, |x - y|)$ can be assumed without loss of generality. Indeed, in Proposition 1.1 \cite{HYY}, it is shown that $\Lambda(x, r) := \inf_{z \in \Rn} \lambda(z, r + |x-z|)$ satisfies that $r \mapsto \Lambda(x, r)$ is non-decreasing, $\Lambda(x, 2r) \leq C_{\lambda} \Lambda(x, r)$, $\mu(B(x, r)) \leq \Lambda(x, r)$, $\Lambda(x, r) \leq \lambda(x, r)$
and $\Lambda(x, r) \leq C_{\lambda} \Lambda(y, r)$ if $|x - y| \leq r$. Therefore, we may (and do) always assume that dominating functions
$\lambda$ satisfy the additional symmetry property $\lambda(x, r) \leq C \lambda(y, r)$ if $|x - y| \leq r$.

From now on, let $\mu = \mu_n \times \mu_m$, where $\mu_n$ and $\mu_m$ are upper doubling measures on $\Rn$ and $\Rm$ respectively.
The corresponding dominating functions are denoted by $\lambda_n$ and $\lambda_m$. We use, for minor convenience, $\ell^\infty$ metrics on $\Rn$ and $\R^m$.
\begin{definition}
A function $b \in L^\infty(\mu)$ is called pseudo-accretive if there is a positive constant $C$ such that for any rectangle
$R \subset \Rn \times \Rm$ with sides parallel to axes,
$$
\frac{1}{\mu(R)} \bigg| \int_{R} b(x) d\mu(x) \bigg| > C.
$$
\end{definition}
In this paper, we will only discuss the case when $b = b_1 \otimes b_2$, where $b_1$ and $b_2$ are in $L^\infty(\mu_n)$ and $L^\infty(\mu_m)$, respectively. Then, the pseudo-accretivity and boundedness of $b$ imply that there exists a constant $C$ such that for any cubes
$I \subset \Rn,\ J  \subset \Rm$,
$$
\frac{1}{\mu_n(I)} \bigg| \int_{I} b_1 \ d\mu_n \bigg| > C,\ \text{and }
\frac{1}{\mu_m(J)} \bigg| \int_{J} b_2 \ d\mu_m \bigg| > C.
$$
That is, $b_1$ and $b_2$ are both pseudo-accretive in the classical sense.

Next, we introduce some appropriate assumptions on kernels that we need throughout the argument. We always assume that the fixed numbers satisfying $\alpha, \beta>0$.

\vspace{0.3cm}
\noindent\textbf{Assumption 2.3 (Standard estimates).} The kernel $K_{t_1,t_2}: \R^{n+m} \times \R^{n+m} \rightarrow \C$ is assumed to satisfy the following estimates:
\begin{enumerate}
\item [(1)] Size condition :
\begin{align*}
|K_{t_1,t_2}(x,y)|
&\lesssim \frac{t_1^{\alpha}}{t_1^{\alpha} \lambda_n(x_1,t_1)+ |x_1 - y_1|^{\alpha} \lambda_n(x_1,|x_1 - y_1|)} \\
&\quad\times  \frac{t_2^{\beta}}{t_2^{\beta} \lambda_m(x_2,t_2)+ |x_2 - y_2|^{\beta} \lambda_m(x_2,|x_2 - y_2|)}.
\end{align*}
\item [(2)] H\"{o}lder condition :
\begin{align*}
&|K_{t_1,t_2}(x,y) - K_{t_1,t_2}(x,(y_1,y_2')) - K_{t_1,t_2}(x,(y_1',y_2)) + K_{t_1,t_2}(x,y')| \\
&\lesssim \frac{|y_1 - y_1'|^{\alpha}}{t_1^{\alpha} \lambda_n(x_1,t_1)+ |x_1 - y_1|^{\alpha} \lambda_n(x_1,|x_1 - y_1|)} \\
&\quad\times  \frac{|y_2 - y_2'|^{\beta}}{t_2^{\beta} \lambda_m(x_2,t_2)+ |x_2 - y_2|^{\beta} \lambda_m(x_2,|x_2 - y_2|)},
\end{align*}
whenever $|y_1 -y_1'| < t_1/2$ and $|y_2 - y_2'| < t_2/2$.
\item [(3)] Mixed H\"{o}lder and size conditions :
\begin{align*}
|K_{t_1,t_2}(x,y) - K_{t_1,t_2}(x,(y_1,y_2'))|
&\lesssim \frac{t_1^{\alpha}}{t_1^{\alpha} \lambda_n(x_1,t_1)+ |x_1 - y_1|^{\alpha} \lambda_n(x_1,|x_1 - y_1|)} \\
&\quad \times \frac{|y_2 - y_2'|^{\beta}}{t_2^{\beta} \lambda_m(x_2,t_2)+ |x_2 - y_2|^{\beta} \lambda_m(x_2,|x_2 - y_2|)},
\end{align*}
whenever $|y_2 -y_2'| < t_2/2$ and
\begin{align*}
|K_{t_1,t_2}(x,y) - K_{t_1,t_2}(x,(y_1',y_2))|
&\lesssim \frac{|y_1 - y_1'|^{\alpha}}{t_1^{\alpha} \lambda_n(x_1,t_1)+ |x_1 - y_1|^{\alpha} \lambda_n(x_1,|x_1 - y_1|)} \\
&\quad \times \frac{t_2^{\beta}}{t_2^{\beta} \lambda_m(x_2,t_2)+ |x_2 - y_2|^{\beta} \lambda_m(x_2,|x_2 - y_2|)},
\end{align*}
whenever $|y_1 - y_1'| < t_1/2$.
\end{enumerate}
\noindent\textbf{Assumption 2.4 (Carleson condition $\times$ Standard estimates).}
If $I \subset \Rn$ is a cube with side length $\ell(I)$, we define the associated Carleson box by $\widehat{I}=I \times (0,\ell(I)]$. We assume the following conditions : For every cube $I \subset \Rn$ and $J \subset \R^m$, there holds that
\begin{enumerate}
\item [(1)] Mixed Carleson and size conditions :
\begin{align*}
\bigg( \iint_{\widehat{I}} \bigg| \int_{I} b_1(y_1) K_{t_1,t_2}(x,y_1,y_2) d\mu_n(y_1) &\bigg|^2 d\mu_n(x_1)\frac{dt_1}{t_1} \bigg)^{1/2}\\&
\lesssim \frac{t_2^{\beta} \ \mu_n(I)^{1/2}}{t_2^{\beta}\lambda_m(x_2,t_2) + |x_2 - y_2|^\beta \lambda_m(x_2,|x_2 - y_2|)}
\end{align*}
and
\begin{align*}
\bigg( \iint_{\widehat{J}} \bigg| \int_{J} b_2(y_2) K_{t_1,t_2}(x,y_1,y_2) d\mu_m(y_2) &\bigg|^2 d\mu_m(x_2) \frac{dt_2}{t_2} \bigg)^{1/2}\\
&\lesssim  \frac{t_1^{\alpha} \ \mu_m(J)^{1/2}}{t_1^{\alpha} \lambda_n(x_1,t_1) + |x_1 - y_1|^\alpha \lambda_n(x_1,|x_1 - y_1|)}.
\end{align*}
\item [(2)] Mixed Carleson and H\"{o}lder conditions :
\begin{align*}
\bigg( \iint_{\widehat{I}} \bigg| \int_{I}& b_1(y_1)[ K_{t_1,t_2}(x,y_1,y_2) - K_{t_1,t_2}(x,y_1,y_2') ] d\mu_n(y_1) \bigg|^2  d\mu_n(x_1) \frac{dt_1}{t_1} \bigg)^{1/2} \\
&\lesssim \frac{|y_2 - y_2'|^{\beta} \ \mu_n(I)^{1/2}}{t_2^{\beta}\lambda_m(x_2,t_2) + |x_2 - y_2|^\beta \lambda_m(x_2,|x_2 - y_2|)}, \
\text{ whenever } |y_2 - y_2'| < t_2/2.
\end{align*}
And
\begin{align*}
\bigg( \iint_{\widehat{J}} \bigg| \int_{J}& b_2(y_2)[ K_{t_1,t_2}(x,y_1,y_2) - K_{t_1,t_2}(x,y_1',y_2) ] d\mu_m(y_2) \bigg|^2  d\mu_m(x_2)\frac{dt_2}{t_2} \bigg)^{1/2} \\
&\lesssim \frac{|y_1 - y_1'|^{\alpha} \ \mu_m(J)^{1/2}}{t_1^{\alpha} \lambda_n(x_1,t_1) + |x_1 - y_1|^\alpha \lambda_n(x_1,|x_1 - y_1|)}, \
\text{whenever } |y_1 - y_1'| < t_1/2.
\end{align*}
\end{enumerate}
We can now formulate our main theorem.
\begin{theorem}\label{Theorem}
Let $\mu=\mu_n \times \mu_m$, where $\mu_n$ and $\mu_m$ are upper doubling measures on $\Rn$ and $\Rm$ respectively. Let $b$ be a pseudo-accretive function on $\Rn \times \Rm$. Assume that the kernels $\{K_{t_1,t_2}\}$ satisfy the Assumptions $2.3-2.4$. If the function $b$ satisfies bi-parameter Carleson condition, then there holds that
\begin{equation}\label{L^2}
\big\| g(f) \big\|_{L^2(\mu)} \lesssim \big\| f \big\|_{L^2(\mu)}.
\end{equation}
Additionally, the bi-parameter Carleson condition is necessary in the following sense :
$K_{t_1,t_2} = K_{t_1} \otimes K_{t_2}$ and the one-parameter kernels satisfy the size condition and corresponding square function bounds.
\end{theorem}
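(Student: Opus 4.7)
The plan is to adapt the one-parameter probabilistic framework of Nazarov--Treil--Volberg and Hyt\"onen's $b$-adapted martingale differences to the bi-parameter non-homogeneous setting of Martikainen \cite{M2014} and Ou \cite{Ou-1}. First I would fix two independent random dyadic grids $\Dn$ on $\Rn$ and $\Dm$ on $\Rm$ with shifted lattices, and introduce the bi-parameter $b$-adapted Haar functions as tensor products $h_{I}^{b_1} \otimes h_{J}^{b_2}$ of Hyt\"onen's one-parameter $b$-adapted Haar functions \cite{H-vector}. The pseudo-accretivity of $b_1, b_2$ guarantees that the associated martingale difference operators $\Delta_K^{b_1}$ and $\Delta_L^{b_2}$ give an unconditional expansion $f = \sum_{K \in \Dn} \sum_{L \in \Dm} \Delta_K^{b_1} \Delta_L^{b_2} f$ converging in $L^2(\mu)$, so that after expanding $\Theta_{t_1,t_2} f$ by linearity we are reduced to bounding
\begin{equation*}
\expect \sum_{K \in \Dn} \sum_{L \in \Dm} \iint \iint \bigl|\Theta_{t_1,t_2}(\Delta_K^{b_1} \Delta_L^{b_2} f)(x)\bigr|^2 \, d\mu(x) \tfrac{dt_1}{t_1} \tfrac{dt_2}{t_2}.
\end{equation*}

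Next I would introduce the good/bad cube dichotomy in both parameters and, by the standard probabilistic averaging argument of NTV adapted by Hyt\"onen--Martikainen \cite{HM}, reduce matters to summing only over pairs $(K,L)$ that are good in both parameters. Because the integration in $t_1,t_2$ decouples the trial cubes $K \times L$ and the integration Whitney regions $W_I \times W_J$, I would discretize the $(t_1,t_2)$-integration by Whitney regions and split the resulting quadruple sum $\sum_{I,J,K,L}$ according to the relative position of $(I,K)$ (separated, nested with large gap, or close/equal) and of $(J,L)$ independently. This produces nine principal configurations: a genuinely bi-parameter diagonal piece, four "one-parameter paraproduct / one-parameter off-diagonal" mixed terms, and four off-diagonal terms (including separated and nested-with-gap pairs).

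For the separated and nested-with-gap configurations I would plug in the size and H\"older estimates from Assumption~2.3, using the goodness of the good cubes to extract geometric decay factors that beat the upper doubling dominating functions $\lambda_n, \lambda_m$, and then sum by Cauchy--Schwarz together with the $L^2$ boundedness of a bi-parameter maximal function (as in \cite{HM}). The mixed pieces require Assumption~2.4, which provides exactly the one-parameter square-function bound needed to carry out a partial Cauchy--Schwarz in one variable while keeping the other in an $L^\infty$-style estimate, again with summability provided by goodness. The genuinely bi-parameter paraproduct term, where both $I \subset K$ and $J \subset L$ with small cubes in a Whitney sense, is where the hypothesis enters decisively: I would reduce it via the usual identity $\Theta_{t_1,t_2}(\Delta_K^{b_1} \Delta_L^{b_2} f) = (\text{average of } f) \cdot \Theta_{t_1,t_2} b + (\text{error})$, and then invoke the bi-parameter Carleson condition \eqref{Car-condition} through a bi-parameter Carleson embedding theorem for non-homogeneous measures to dominate it by $\|f\|_{L^2(\mu)}^2$. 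I expect this paraproduct step, and in particular setting up the correct bi-parameter Carleson embedding in the upper doubling context, to be the main obstacle.

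For the necessity statement, under $K_{t_1,t_2} = K_{t_1} \otimes K_{t_2}$ and the assumed one-parameter square-function bounds, I would test $g(b)$ in $L^2(\mu)$: by localizing in a Carleson-admissible family of rectangles $\{I \times J\}$ exhausting $\Omega$ and exploiting the pseudo-accretivity of $b_1, b_2$, the contribution of each $W_I \times W_J$ is comparable to $C_{IJ}^b$ up to a constant, so that $\|g(b \chi_\Omega)\|_{L^2(\mu)}^2 \gtrsim \sum_{I \times J \subset \Omega} C_{IJ}^b$; combining with $\|g(b \chi_\Omega)\|_{L^2(\mu)}^2 \lesssim \|b\chi_\Omega\|_{L^2(\mu)}^2 \lesssim \mu(\Omega)$ gives \eqref{Car-condition}.
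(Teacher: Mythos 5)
Your proposal follows essentially the same strategy as the paper: random dyadic grids with good/bad cubes, a bi-parameter $b$-adapted Haar expansion in the sense of Hyt\"onen, discretization of the $(t_1,t_2)$-integral over Whitney regions $W_I\times W_J$, a relative-position decomposition into separated/nearby/nested pieces in each parameter, size and (mixed) H\"older estimates plus Assumption~2.4 to control the off-diagonal and mixed blocks, the bi-parameter Carleson condition fed into a dyadic Carleson embedding theorem to handle the genuinely bi-parameter paraproduct $\Sigma_{Car,Car}$, and a testing argument for necessity. The bookkeeping differs superficially (you describe nine configurations at once, while the paper first splits on the sign of $\ell(I_1)-\ell(I_2)$ and $\ell(J_1)-\ell(J_2)$ and only then refines the ``$\geq$'' parts into out/near/in, and it telescopes the Haar coefficients via $\langle f_{J_1}\rangle_I=\sum_k f_{I^{(k)}J_1}\langle\varphi^{b_1}_{I^{(k)}}\rangle_{I^{(k-1)}}$ rather than via an identity on $\Theta_{t_1,t_2}\Delta_K^{b_1}\Delta_L^{b_2}f$), but the underlying ideas and the role of each hypothesis match the paper's proof.
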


As for the proof of the necessity in our main Theorem $\ref{Theorem}$, we follow exactly the same scheme of proof of the necessity \cite{M2014} with slight modifications.
Some non-homogeneous arguments can be adapted from Lemma 8.7-8.9 \cite{HM}.
Moreover, an important tool is Journ\'{e}¡¯s covering lemma with general product measures, which was given in Theorem 8.1 \cite{HM}. We omit the details.

\section{Preliminaries}
In this section, our goal is to introduce some fundamental tools including the random dyadic grids, good/bad cubes, and $b$-adapted Haar functions. Based on these, we give some reductions of the initial estimate.
\subsection{Random Dyadic Grids}
We here will introduce the fundamental technique, random dyadic grids. Let $\beta_n = \{ \beta_n^j\}_{j \in \Z}$, where
$\beta_n^j \in \{0,1\}^n$. Let $\mathcal{D}_n^0$ be the standard dyadic grids on $\Rn$. In $\Rn$, we define the new dyadic
grid
$$ \mathcal{D}_n = \Big\{I + \beta_n; I \in \mathcal{D}_n^0 \Big\} := \Big\{I + \sum_{j:2^{-j}<\ell(I)} 2^{-j} \beta_n^j; I \in \mathcal{D}_n^0 \Big\}.$$
The dyadic grid $\mathcal{D}_m$ in $\R^m$ is similarly defined. There is a natural product probability structure on
$(\{0, 1\}^n)^{\Z}$ and $(\{0, 1\}^m)^{\Z}$. So we have independent random dyadic grids $\mathcal{D}_n$ and $\mathcal{D}_m$ in $\Rn$ and $\R^m$ respectively. Even if $n = m$ we need two independent grids.

\begin{definition}
A cube $I \in \mathcal{D}_n$ is said to be $bad$ if there exists a $J \in \mathcal{D}_n$ with $\ell(J) \geq 2^r \ell(I)$ such that
$\dist(I,\partial J) \leq \ell(I)^{\gamma_n} \ell(J)^{1-\gamma_n}$. Otherwise, $I$ is called $good$. Here $r \in \Z_+$ and $\gamma_n \in (0,\frac12)$ are given parameters.
\end{definition}
Denote $\pi_{good}^n = \mathbb{P}_{\beta_n}(I + \beta_n \ \text{is \ good}) = \mathbb{E}_{\beta_n}(\mathbf{1}_{good}(I+\beta_n))$.
Then $\pi_{good}^n$ is independent of $I \in \mathcal{D}_n^0$, and the parameter $r$ is a fixed constant so that $\pi_{good}^n,\pi_{good}^m > 0$.
Throughout this article, we take $\gamma_n = \frac{\alpha}{2(d_{\lambda_n}+\alpha)}$, where $\alpha > 0$ appears in the kernel estimates.
It is important to observe that the position and goodness of a cube $I \in \mathcal{D}_n^0$ are independent.

\subsection{$b$-adapted Haar functions}
The abbreviation $b_1(E):=\int_E b_1 \ d\mu_n$ will be used.
For each $I \in \Dn$, we denote its dyadic children by $I_1,\ldots,I_{2^n}$. We index $\{I_j\}$ in such a way that
$$
|b_1(I^*_j)| \geq [1-(k-1)2^{-n}] \mu_n(I),\ \
I^*_{j} = \bigcup_{k=j}^{2^n} I_k,\ j=1,\ldots,2^n.
$$
The existence of such way was shown in Lemma 4.2 \cite{H-vector}. The $b_1$-adapted Haar function is defined by
$$
\varphi_{I,j}^{b_1} :=
\bigg( \frac{b_1(I_j) b_1(I^*_{j+1})}{b_1(I^*_j)}\bigg)^{1/2}
\bigg( \frac{\mathbf{1}_{I_j}}{b_1(I_j)} - \frac{\mathbf{1}_{I^*_{j+1}}}{b_1(I^*_{j+1})}\bigg).
$$
Similarly, we can define the function $\psi_{J,k}^{b_2}$ with respect to $b_2$ and $J \in \D_m$.

The adapted Haar functions enjoy the following properties :
\begin{enumerate}
\item $\int_{\Rn} b_1 \varphi_{I,j}^{b_1} \ d\mu_n = 0$.
\item $|\varphi_{I,j}^{b_1}| \simeq \mu_n(I_j)^{1/2} \bigg( \frac{\mathbf{1}_{I_j}}{b_1(I_j)} + \frac{\mathbf{1}_{I^*_{j+1}}}{b_1(I^*_{j+1})}\bigg)$.
\item $\big\| \varphi_{I,j}^{b_1} \big\|_{L^p(\mu_n)} \simeq \mu_n(I_j)^{1/p-1/2}$, $p \in [1,\infty]$.
\item The similar above properties hold for $\psi_{J,k}^{b_2}$ as well.
\item For any $f \in L^2(\mu)$, there holds that
$$
f = \sum_{j=1}^{2^n} \sum_{k=1}^{2^m} \sum_{I \in \Dn} \sum_{J \in \Dm} \langle f, \varphi_{I,j}^{b_1} \otimes \psi_{J,k}^{b_2} \rangle \
b \cdot \varphi_{I,j}^{b_1} \otimes \psi_{J,k}^{b_2}.
$$
\end{enumerate}
The properties (1)-(4) can be found in Proposition 4.3 \cite{H-vector}. Property (5) can be verified by iteration of the one-parameter argument.
\subsection{Averaging over Good Whitney Regions}
Let $f \in L^2(\mu)$. Let always $I_1, I_2 \in \mathcal{D}_n$ and $J_1, J_2 \in \mathcal{D}_m$.
Note that the position and goodness of $I + \beta_n$ are independent. Therefore, one can write
$$
\big\| g(f) \big\|_{L^2(\mu)}^2
= c_{m,n} \mathbb{E}_{\beta_n} \mathbb{E}_{\beta_m} \Sigma_{\beta_n,\beta_m},
$$
where $c_{m,n}=(\pi_{good}^n \cdot \pi_{good}^m)^{-1}$ and
$$
\Sigma_{\beta_n,\beta_m}:= \sum_{I_2,J_2: good} \iint_{W_{J_2}} \iint_{W_{I_2}}
\big| \Theta_{t_1,t_2} f(x) \big|^2 d\mu_n(x_1) \frac{dt_1}{t_1} d\mu_m(x_2) \frac{dt_2}{t_2}.
$$
Indeed, to get this equality, we only need to apply the similar argument to one-parameter case twice. For more details in one-parameter setting, see \cite{CLX}.
Then, applying $b$-adapted Haar decomposition of $f$ (suppressing the finite $j,k$ summation), we may further write
$$
\Sigma_{\beta_n,\beta_m}= \sum_{I_2,J_2: good} \iint_{W_{J_2}} \iint_{W_{I_2}}
\Big| \sum_{I_1,J_1}  f_{I_1 J_1} \Theta_{t_1,t_2} (b \cdot \varphi^{b_1}_{I_1} \otimes \psi^{b_2}_{J_1})(x)\Big|^2 d\mu_n(x_1) \frac{dt_1}{t_1} d\mu_m(x_2) \frac{dt_2}{t_2}.
$$
When $\beta_n$ and $\beta_m$ are fixed, we denote $\Sigma_{\beta_n,\beta_m}$ by $\Sigma$. Consequently, it is enough to show $\Sigma \lesssim ||f||_{L^2(\mu)}^2$, where the implied constant is independent of $\beta_n$ and $\beta_m$.

We can preform the decomposition
$$ \Sigma \lesssim \Sigma_{<,<} + \Sigma_{<,\geq} + \Sigma_{\geq,<} + \Sigma_{\geq,\geq},$$
where
\begin{align*}
\Sigma_{<,<}:= &\sum_{I_2,J_2: good} \iint_{W_{J_2}} \iint_{W_{I_2}} \Big| \sum_{\substack{I_1,J_1 \\ \ell(I_1) < \ell(I_2) \\ \ell(J_1) < \ell(J_2)}}
 f_{I_1 J_1} \Theta_{t_1,t_2} (b \cdot \varphi^{b_1}_{I_1} \otimes \psi^{b_2}_{J_1})(x) \Big|^2 d\mu_n(x_1) \frac{dt_1}{t_1} d\mu_m(x_2) \frac{dt_2}{t_2},
\end{align*}
and the others are completely similar.

Sequentially, it suffices to focus on controlling the four pieces: $\Sigma_{<,<}$, $\Sigma_{<,\geq}$, $\Sigma_{\geq,<}$, $\Sigma_{\geq,\geq}$ in the following sections.
\section{Some standard estimates}
This section is devoted to proving some estimates, which will be used at certain points in our proof.
\begin{lemma}\label{Lemma-1}
Assume that $I_1,I_2 \in \Dn$ with $\ell(I_1) < \ell(I_2)$. Denote
$$
\mathscr{F}_{n,\alpha}(I_1,x_1,t_1) :=
\int_{I_1} \frac{|y_1 - c_{I_1}|^\alpha}{t_1^{\alpha} \lambda_n(x_1,t_1)+ |x_1 - y_1|^{\alpha} \lambda_n(x_1,|x_1 - y_1|)} d\mu_n(y_1).
$$
Then there holds that
$$
\mathscr{F}_{n,\alpha}(I_1,x_1,t_1)
\lesssim A_{I_1 I_2} \mu_n(I_1)^{-1/2} \mu_n(I_2)^{-1/2}.
$$
\end{lemma}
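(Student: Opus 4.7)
The plan is to exploit three ingredients: the vanishing-moment factor $|y_1-c_{I_1}|^\alpha$ in the numerator, the upper-doubling symmetry $\lambda_n(x,r)\simeq\lambda_n(y,r)$ when $|x-y|\le r$, and the scale separation $\ell(I_1)<\ell(I_2)$. The quantity $A_{I_1I_2}$ is designed to reflect the geometry when $(x_1,t_1)\in W_{I_2}$ (the context in which this lemma is applied in Section 3.3), so I will estimate the integral in terms of a regularized distance and then match the result to $A_{I_1I_2}$.

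First I bound $|y_1-c_{I_1}|^\alpha\le c\,\ell(I_1)^\alpha$ uniformly on $I_1$ and pull this factor outside. What remains is
\[
\mathscr{I}:=\int_{I_1}\frac{d\mu_n(y_1)}{t_1^\alpha\lambda_n(x_1,t_1)+|x_1-y_1|^\alpha\lambda_n(x_1,|x_1-y_1|)},
\]
which I split according to the position of $x_1$ relative to $I_1$. In the near case $\dist(x_1,I_1)\le t_1$, the first term in the denominator dominates uniformly on $I_1$, giving $\mathscr{I}\lesssim\mu_n(I_1)/[t_1^\alpha\lambda_n(x_1,t_1)]$. In the far case $\dist(x_1,I_1)>t_1$, we have $|x_1-y_1|\simeq\dist(x_1,I_1)+\ell(I_1)=:D_*$ uniformly for $y_1\in I_1$, and by the symmetry of $\lambda_n$ the base point $x_1$ may be replaced by $c_{I_1}$, yielding $\mathscr{I}\lesssim\mu_n(I_1)/[D_*^\alpha\lambda_n(c_{I_1},D_*)]$.

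Combining the two cases gives the unified bound
\[
\mathscr{F}_{n,\alpha}(I_1,x_1,t_1)\lesssim \frac{\ell(I_1)^\alpha\,\mu_n(I_1)}{(t_1+\ell(I_1)+\dist(x_1,I_1))^\alpha\,\lambda_n(c_{I_1},\,t_1+\ell(I_1)+\dist(x_1,I_1))}.
\]
Since $(x_1,t_1)\in W_{I_2}$ in the application, this regularized distance is comparable to $\ell(I_2)+\dist(I_1,I_2)$, and upper-doubling symmetry lets me interchange the base point among $c_{I_1}$, $x_1$, and $c_{I_2}$ freely. Writing $\mu_n(I_1)=\mu_n(I_1)^{1/2}\mu_n(I_1)^{1/2}$ and inserting a compensating factor $\mu_n(I_2)^{1/2}/\mu_n(I_2)^{1/2}$ then reorganizes the expression into the form $A_{I_1I_2}\,\mu_n(I_1)^{-1/2}\mu_n(I_2)^{-1/2}$ matching the definition of $A_{I_1I_2}$ used in the paper.

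The main obstacle is the bookkeeping in the far case: one must verify that $|x_1-y_1|\gtrsim\ell(I_1)$ uniformly for $y_1\in I_1$ so that the symmetry $\lambda_n(x_1,|x_1-y_1|)\simeq\lambda_n(c_{I_1},|x_1-y_1|)$ is available, and that $|x_1-y_1|$ is essentially constant on $I_1$ in this regime. Both facts follow from $\ell(I_1)<\ell(I_2)\le 2t_1$ together with $x_1\in I_2$, but care is needed not to lose control of $C_{\lambda_n}$-powers when iterating the symmetry.
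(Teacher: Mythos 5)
Your approach is the same as the paper's: bound $|y_1-c_{I_1}|^\alpha\le\ell(I_1)^\alpha$, observe (implicitly in the paper, explicitly and correctly in your write-up) that the lemma is applied with $(x_1,t_1)\in W_{I_2}$, so $t_1\simeq\ell(I_2)$ and $|x_1-y_1|\gtrsim d(I_1,I_2)$ on $I_1$, use the symmetry of $\lambda_n$ to move the base point, and repackage. Your near/far split and explicit identification of the Whitney-region hypothesis are just a more careful version of the paper's one-line "begin by the estimate" step, which silently uses $t_1\simeq\ell(I_2)$ and $x_1\in I_2$.

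However, the last sentence of your argument is wrong as written. Your (correct) intermediate bound is
\[
\mathscr{F}_{n,\alpha}(I_1,x_1,t_1)\;\lesssim\;\frac{\ell(I_1)^\alpha\,\mu_n(I_1)}{D(I_1,I_2)^\alpha\,\lambda_n(c_{I_1},D(I_1,I_2))}.
\]
Comparing this with
\[
A_{I_1 I_2}=\frac{\ell(I_1)^{\alpha/2}\ell(I_2)^{\alpha/2}}{D(I_1,I_2)^{\alpha}\,\sup\limits_{z_1\in I_1\cup I_2}\lambda_n(z_1,D(I_1,I_2))}\,\mu_n(I_1)^{1/2}\mu_n(I_2)^{1/2},
\]
and using $\ell(I_1)^{\alpha/2}\le\ell(I_2)^{\alpha/2}$ and $\lambda_n(c_{I_1},D)\simeq\sup_{z_1}\lambda_n(z_1,D)$, your bound reorganizes to $A_{I_1 I_2}\,\mu_n(I_1)^{+1/2}\mu_n(I_2)^{-1/2}$, not to $A_{I_1 I_2}\,\mu_n(I_1)^{-1/2}\mu_n(I_2)^{-1/2}$; you cannot flip the sign on the $\mu_n(I_1)$ exponent by "inserting compensating factors." What you are bumping into is a misprint in the lemma itself: the paper's own proof silently drops the factor $\mu_n(I_1)$ coming from $\int_{I_1}d\mu_n$, and the way the lemma is used in Section 5 — namely $\|\varphi^{b_1}_{I_1}\|_{L^\infty(\mu_n)}\mathscr{F}_{n,\alpha}\lesssim A_{I_1I_2}\mu_n(I_2)^{-1/2}$ with $\|\varphi^{b_1}_{I_1}\|_{L^\infty(\mu_n)}\simeq\mu_n(I_1)^{-1/2}$ — confirms that the intended conclusion is $\mathscr{F}_{n,\alpha}\lesssim A_{I_1 I_2}\,\mu_n(I_1)^{1/2}\mu_n(I_2)^{-1/2}$. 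So your argument actually proves the correct version of the lemma; just do not claim it matches the misprinted $-1/2$, and state the exponent as $+1/2$.
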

\begin{proof}
We begin by the estimate
$$
\mathscr{F}_{n,\alpha}(I_1,x_1,t_1)
\lesssim \frac{\ell(I_1)^\alpha}{\ell(I_2)^{\alpha} \lambda_n(x_1,\ell(I_2))+ d(I_1,I_2)^{\alpha} \lambda_n(x_1,d(I_1,I_2))}.
$$
Note that if $\ell(I_2) \leq d(I_1,I_2)$, then $D(I_1,I_2) \simeq d(I_1,I_2)$. If $\ell(I_2) > d(I_1,I_2)$, then $D(I_1,I_2) \simeq \ell(I_2)$.
Moreover, for any $z_1 \in I_1 \cup I_2$, it holds that $|x_1 - z_1| \lesssim D(I_1,I_2)$, which implies that $\lambda_n(z_1,D(I_1,I_2)) \simeq \lambda_n(x_1,D(I_1,I_2))$.
Hence, we have
\begin{align*}
\mathscr{F}_{n,\alpha}(I_1,x_1,t_1)
\lesssim \frac{\ell(I_1)^{\alpha/2} \ell(I_2)^{\alpha/2}}{D(I_1,I_2) \sup\limits_{z_1 \in I_1 \cup I_2} \lambda_n(z_1,D(I_1,I_2))}
= A_{I_1 I_2} \mu_n(I_1)^{-1/2} \mu_m(I_2)^{-1/2}.
\end{align*}
\end{proof}
\begin{lemma}\label{Lemma-2}
Let $k \geq 1$ and $I \in \Dn$ be a good cube. Set
$$
\mathfrak{F}_k(x_1):=\int_{(I^{(k-1)})^c} \frac{t_1^\alpha}{t_1^{\alpha} \lambda_n(x_1,t_1)+ |x_1 - y_1|^{\alpha} \lambda_n(x_1,|x_1 - y_1|)} d\mu_n(y_1).
$$
Then we have the geometric decay
$\mathfrak{F}_k(x_1) \lesssim 2^{-\alpha k/2}$.
\end{lemma}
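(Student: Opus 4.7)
The plan is to decompose the integration region as a disjoint union of dyadic annuli
$(I^{(k-1)})^c = \bigsqcup_{j \geq k}\bigl(I^{(j)} \setminus I^{(j-1)}\bigr)$, bound the contribution of each annulus using the goodness of $I$, and then sum a geometric series. The entire decay in $k$ will come from the goodness-induced separation of $x_1 \in I$ from $y_1 \notin I^{(j-1)}$, so the whole argument hinges on that one input; everything else is standard upper-doubling bookkeeping. I will tacitly use $t_1 \lesssim \ell(I)$, which is the situation in which the lemma is invoked (the Whitney regime $(x_1,t_1) \in W_I$).

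Concretely, for each $j$ with $j - 1 \geq r$, applying goodness of $I$ to the ancestor $J = I^{(j-1)}$ (which satisfies $\ell(I^{(j-1)}) = 2^{j-1}\ell(I) \geq 2^r \ell(I)$) gives $\dist(I,\partial I^{(j-1)}) > \ell(I)^{\gamma_n} \ell(I^{(j-1)})^{1-\gamma_n} \simeq 2^{j(1-\gamma_n)} \ell(I)$. Hence for $x_1 \in I$ and $y_1 \in I^{(j)} \setminus I^{(j-1)}$ any segment from $x_1$ to $y_1$ must cross $\partial I^{(j-1)}$, so $|x_1 - y_1| \gtrsim 2^{j(1-\gamma_n)} \ell(I)$. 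Using this together with monotonicity of $r \mapsto \lambda_n(x_1, r)$ and $t_1 \lesssim \ell(I)$, the integrand is controlled on the $j$-th annulus by
\[
\frac{\ell(I)^\alpha}{\bigl(2^{j(1-\gamma_n)} \ell(I)\bigr)^\alpha \, \lambda_n\!\bigl(x_1, 2^{j(1-\gamma_n)} \ell(I)\bigr)}.
\]
For the measure of the annulus, upper doubling combined with the symmetry of $\lambda_n$ (legitimate since $|x_1 - c_{I^{(j)}}| \lesssim 2^j \ell(I)$) gives $\mu_n\bigl(I^{(j)} \setminus I^{(j-1)}\bigr) \leq \mu_n(I^{(j)}) \lesssim \lambda_n(x_1, 2^j \ell(I))$.

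Multiplying these ingredients and invoking the doubling estimate $\lambda_n(x_1, 2^j \ell(I)) \leq C_{\lambda_n}^{j \gamma_n} \lambda_n(x_1, 2^{j(1-\gamma_n)} \ell(I)) = 2^{j \gamma_n d_{\lambda_n}} \lambda_n(x_1, 2^{j(1-\gamma_n)} \ell(I))$, the $j$-th annulus contributes $\lesssim 2^{-j[\alpha(1-\gamma_n) - \gamma_n d_{\lambda_n}]} = 2^{-j\alpha/2}$, where the last equality uses precisely the paper's choice $\gamma_n = \alpha/[2(d_{\lambda_n} + \alpha)]$. Summing geometrically over $j \geq k$ yields $\mathfrak{F}_k(x_1) \lesssim 2^{-\alpha k/2}$ whenever $k \geq r+1$; for the finitely many remaining values $k \leq r$, a crude dyadic-annulus bound around $x_1$ at scale $t_1$ (using only the size condition) gives $\mathfrak{F}_k(x_1) \leq \mathfrak{F}_1(x_1) \lesssim 1 \lesssim 2^{-\alpha k/2}$ with an $r$-dependent constant. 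The main obstacle is the exponent balance in the display above: the measure of $I^{(j)}$ may grow as fast as $2^{j d_{\lambda_n}}$, whereas the separation only forces the denominator to grow as $2^{j\alpha(1-\gamma_n)}$, so the inequality $\alpha(1-\gamma_n) > \gamma_n d_{\lambda_n}$ is exactly what fixes the range of admissible $\gamma_n$, and the specific value $\alpha/[2(d_{\lambda_n}+\alpha)]$ produces the clean decay rate $\alpha/2$ advertised in the statement.
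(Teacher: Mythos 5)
Your proof is correct, but the treatment of the tail ($k>r$) takes a genuinely different route from the paper's. The paper applies goodness exactly once, at scale $k$, to obtain the single uniform separation $|x_1-y_1|\geq d(I,(I^{(k-1)})^c)>2^{(k-1)(1-\gamma_n)}\ell(I)\gtrsim 2^{k/2}\ell(I)$ for every $y_1\in(I^{(k-1)})^c$ (here only $\gamma_n\leq 1/2$ is used), and then invokes the standard upper-doubling tail estimate
$\int_{|y_1-x_1|>R}\frac{|y_1-x_1|^{-\alpha}}{\lambda_n(x_1,|y_1-x_1|)}\,d\mu_n(y_1)\lesssim R^{-\alpha}$,
which already hides the dyadic decomposition into \emph{ball} annuli centered at $x_1$; on each such annulus the measure and the $\lambda_n$ in the denominator sit at the same scale and cancel, so no knowledge of the precise value of $\gamma_n$ is needed. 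You instead peel off \emph{cube} shells $I^{(j)}\setminus I^{(j-1)}$ and reapply goodness at every scale $j\geq k$; since the shell's measure is controlled at scale $2^j\ell(I)$ while the separation to $\partial I^{(j-1)}$ only gives scale $2^{j(1-\gamma_n)}\ell(I)$, you must pay a doubling penalty $2^{j\gamma_n d_{\lambda_n}}$, and the convergence of the resulting series then hinges on the exponent balance $\alpha(1-\gamma_n)-\gamma_n d_{\lambda_n}=\alpha/2$, i.e.\ on the exact value of $\gamma_n$. Both arguments are sound and both yield the decay $2^{-\alpha k/2}$; the paper's version is more economical because a single application of goodness and a scale-invariant off-the-shelf estimate suffice, whereas yours makes visible the parameter balance built into $\gamma_n=\alpha/[2(d_{\lambda_n}+\alpha)]$ (though, strictly, that balance is forced elsewhere, in the separated estimate $(\ref{d-D})$, rather than here). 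Your handling of $k\leq r$ matches the paper's in spirit (a crude $O(1)$ bound using $t_1\lesssim\ell(I)$, with an $r$-dependent constant).
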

\begin{proof}
If $k \leq r$, we get
\begin{align*}
\mathfrak{F}_k(x_1)
&\lesssim \frac{\mu_n(3I)}{\lambda_n(x_1,\ell(I))} + \ell(I)^{\alpha} \int_{(3I)^c} \frac{|y_1 - c_I|^{-\alpha}}{\lambda_n(c_I,|y_1 - c_I|)} d\mu_n(y_1)
\lesssim 1 \simeq 2^{-\alpha k/2}.
\end{align*}
If $k > r$, we have by the goodness of $I$ that
$$ d(I,(I^{(k-1)})^c) > \ell(I)^{\gamma_n} \ell(I^{(k-1)})^{1-\gamma_n}= 2^{(k-1)(1-\gamma_n)} \ell(I) \gtrsim 2^{k/2} \ell(I).$$
Thus, it immediately yields that
\begin{align*}
\mathfrak{F}_k(x_1)
&\leq  \ell(I)^{\alpha} \int_{B(x_1,d(I,(I^{(k-1)})^c))} \frac{|y_1 - x_1|^{-\alpha}}{\lambda_n(x_1,|y_1 - x_1|)} d\mu_n(y_1)\\&
\lesssim \ell(I)^\alpha d(I,(I^{(k-1)})^c)^{-\alpha} \lesssim 2^{-\alpha k/2}.
\end{align*}
\end{proof}
We need the following lemma, which can be found in \cite{NTV2003}.
\begin{lemma}(\cite{NTV2003}) \label{A-I1-I2}
Let us set
$$
A_{I_1 I_2}=\frac{\ell(I_1)^{\alpha/2} \ell(I_2)^{\alpha/2}}{D(I_1,I_2)^{\alpha} \sup\limits_{z_1 \in I_1 \cup I_2} \lambda_n(z_1,D(I_1,I_2))} \mu_n(I_1)^{1/2} \mu_n(I_2)^{1/2},
$$
where $D(I_1,I_2)=\ell(I_1) + \ell(I_2) + d(I_1,I_2)$, $I_1,I_2 \in \mathcal{D}_n$. Then for any $x_{I_1}, y_{I_2} \geq 0$, we have the following estimate
$$
\Big( \sum_{I_1,I_2} A_{I_1 I_2} x_{I_1} y_{I_2} \Big)^2 \lesssim \sum_{I_1} x_{I_1}^2
\times \sum_{I_2} y_{I_2}^2.
$$
In particular, there holds that
$$ \sum_{I_2} \Big( \sum_{I_1} A_{I_1 I_2} x_{I_1} \Big)^2 \lesssim \sum_{I_1}x_{I_1}^2. $$
\end{lemma}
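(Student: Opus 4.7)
The plan is to derive the main bilinear inequality via a weighted Schur test; the second (``in particular'') assertion then follows by duality, e.g.\ by choosing $y_{I_2}$ proportional to $\sum_{I_1} A_{I_1 I_2} x_{I_1}$ and normalizing so that $\sum_{I_2} y_{I_2}^2 = 1$.

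Starting from the Cauchy--Schwarz inequality with the asymmetric weight $w_{I_1,I_2} := \mu_n(I_2)^{1/2} / \mu_n(I_1)^{1/2}$, one has
\begin{align*}
\sum_{I_1, I_2} A_{I_1 I_2} x_{I_1} y_{I_2}
&\leq \Big(\sum_{I_1, I_2} A_{I_1 I_2}\, w_{I_1,I_2}\, x_{I_1}^2\Big)^{1/2} \Big(\sum_{I_1, I_2} A_{I_1 I_2}\, w_{I_1,I_2}^{-1}\, y_{I_2}^2\Big)^{1/2}.
\end{align*}
Since $A_{I_1 I_2}$ is symmetric in its two arguments, the whole claim thereby reduces to the uniform one-sided Schur estimate
\begin{align*}
\sum_{I_2 \in \Dn} A_{I_1 I_2}\, \mu_n(I_2)^{1/2} \lesssim \mu_n(I_1)^{1/2}, \qquad \forall\, I_1 \in \Dn.
\end{align*}

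To prove this, I would fix $I_1$ and decompose the $I_2$-sum according to the relative scale $j := \log_2\bigl(\ell(I_2)/\ell(I_1)\bigr) \in \Z$ and then further according to the dyadic shells $\mathcal{A}_{j,k}$ of those $I_2 \in \Dn$ with $\ell(I_2) = 2^j \ell(I_1)$ and $|c_{I_2} - c_{I_1}| \in (2^{k-1} D_j,\, 2^k D_j]$, where $D_j := \max\{\ell(I_1), \ell(I_2)\}$ and $k \geq 0$ (with $k = 0$ capturing the ``close'' regime). Inside $\mathcal{A}_{j,k}$ one has $D(I_1, I_2) \simeq 2^k D_j$; the symmetry of $\lambda_n$ recalled in Section~2 gives $\sup_{z \in I_1 \cup I_2} \lambda_n(z, D(I_1, I_2)) \simeq \lambda_n(c_{I_1}, 2^k D_j)$; and the cubes in $\mathcal{A}_{j,k}$ are pairwise disjoint and contained in a ball of radius $\lesssim 2^k D_j$ around $c_{I_1}$, so the upper doubling property yields $\sum_{I_2 \in \mathcal{A}_{j,k}} \mu_n(I_2) \lesssim \lambda_n(c_{I_1}, 2^k D_j)$. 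Plugging this into the definition of $A_{I_1 I_2}$ and summing over $\mathcal{A}_{j,k}$ bounds the contribution by
\begin{align*}
\mu_n(I_1)^{1/2} \cdot \frac{\ell(I_1)^{\alpha/2}\, \ell(I_2)^{\alpha/2}}{(2^k D_j)^\alpha} \simeq \mu_n(I_1)^{1/2} \cdot 2^{-|j|\alpha/2 - k\alpha},
\end{align*}
which is summable in $j \in \Z$ and $k \geq 0$.

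The main obstacle is the careful bookkeeping of the interplay between scales and distance annuli; the crucial cancellation is that the upper doubling bound on $\mu_n$ in each shell $\mathcal{A}_{j,k}$ is exactly matched by the $\lambda_n$-factor in the denominator of $A_{I_1 I_2}$, leaving only a pure geometric decay in $\alpha$-powers of $\ell$ and $D$. Both regimes $j \geq 0$ (so $D_j \simeq \ell(I_2)$) and $j < 0$ (so $D_j \simeq \ell(I_1)$) must be handled; the symmetry of $D(I_1, I_2)$ and of the defining supremum makes the two cases mirror images, and the symmetric counterpart $\sum_{I_1} A_{I_1 I_2} \mu_n(I_1)^{1/2} \lesssim \mu_n(I_2)^{1/2}$ follows by exchanging the roles of $I_1$ and $I_2$.
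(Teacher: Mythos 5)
The paper does not prove this lemma; it simply attributes it to Nazarov--Treil--Volberg \cite{NTV2003}. Your Schur-test argument is correct and is, in substance, the standard proof from that reference. The reduction is sound: $A_{I_1 I_2}$ is symmetric in $(I_1,I_2)$, so with the weight $w_{I_1,I_2}=\mu_n(I_2)^{1/2}/\mu_n(I_1)^{1/2}$ the Cauchy--Schwarz step reduces everything to the single one-sided bound $\sum_{I_2} A_{I_1 I_2}\,\mu_n(I_2)^{1/2} \lesssim \mu_n(I_1)^{1/2}$, i.e.\ $\sum_{I_2}\ell(I_1)^{\alpha/2}\ell(I_2)^{\alpha/2}\,\mu_n(I_2)\big/\big(D(I_1,I_2)^{\alpha}\sup_{z}\lambda_n(z,D(I_1,I_2))\big)\lesssim 1$. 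Your shell decomposition carries this out correctly: in $\mathcal{A}_{j,k}$ one has $D(I_1,I_2)\simeq 2^kD_j$, cubes of a fixed scale are pairwise disjoint and lie in a ball of radius $\lesssim 2^kD_j$ about $c_{I_1}$, so $\sum_{I_2\in\mathcal{A}_{j,k}}\mu_n(I_2)\le \mu_n(B(c_{I_1},C2^kD_j))\le\lambda_n(c_{I_1},C2^kD_j)\lesssim\lambda_n(c_{I_1},2^kD_j)$, which exactly cancels the $\lambda_n$ in the denominator and leaves $2^{-|j|\alpha/2-k\alpha}$, summable over $j\in\Z$, $k\geq 0$. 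The duality step for the ``in particular'' statement is also correct. One minor remark: for the upper bound on $A_{I_1I_2}$ you only need $\sup_{z\in I_1\cup I_2}\lambda_n(z,D(I_1,I_2))\geq\lambda_n(c_{I_1},D(I_1,I_2))$, which is trivial since $c_{I_1}\in I_1$; the full two-sided comparability you invoke is unnecessary here (though it does hold by the symmetry property of $\lambda_n$ recalled in Section~2).
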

\begin{lemma}\label{Lemma-4}
If we denote
$$
a_I := \iint_{W_{I}} \Big|\Theta_{t_1,t_2} (b_1 \otimes (b_2 \psi^{b_2}_{J_1}))(x) \Big|^2 d\mu_n(x_1) \frac{dt_1}{t_1},
$$
then $\{ a_I \}_{I \in \Dn}$ is a Carleson sequence. Rather, there holds for any $I \in \Dn$
\begin{equation}\label{Car-1}
\sum_{I': I' \subset I} a_{I'} \lesssim \big(A_{J_1 J_2} \mu_m(J_2)^{-1/2}\big)^2 \mu_n(I).
\end{equation}
\end{lemma}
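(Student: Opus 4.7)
The plan is to first observe that $\widehat{I}=I\times(0,\ell(I)]$ decomposes up to measure zero into the disjoint union of the Whitney regions $W_{I'}$ over dyadic $I'\subseteq I$, so that
$$
\sum_{I'\subseteq I} a_{I'}=\iint_{\widehat{I}}\bigl|\Theta_{t_1,t_2}(b_1\otimes\phi_2)(x)\bigr|^2\,d\mu_n(x_1)\,\tfrac{dt_1}{t_1},\qquad \phi_2:=b_2\psi_{J_1}^{b_2}.
$$
By property (1) of the adapted Haar functions, $\int\phi_2\,d\mu_m=0$, and $\phi_2$ is supported in $J_1$. Using this vanishing moment to subtract $K_{t_1,t_2}(x,y_1,c_{J_1})$ and then applying Minkowski's inequality in $y_2$ reduces matters to bounding, for each $y_2\in J_1$,
$$
G(y_2):=\left(\iint_{\widehat{I}}\left|\int_{\Rn}b_1(y_1)[K(x,y_1,y_2)-K(x,y_1,c_{J_1})]\,d\mu_n(y_1)\right|^2 d\mu_n(x_1)\,\tfrac{dt_1}{t_1}\right)^{1/2}.
$$

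To estimate $G(y_2)$, I split the $y_1$-integral as $\int_I+\sum_{k\geq1}\int_{I^{(k)}\setminus I^{(k-1)}}$. For the inner piece I apply the Mixed Carleson-Hölder condition (Assumption 2.4(2)) directly, which since $|y_2-c_{J_1}|\leq\ell(J_1)/2<t_2/2$ in the operative regime $\ell(J_1)<\ell(J_2)$, $t_2>\ell(J_2)/2$, yields
$$
\left(\iint_{\widehat{I}}\Bigl|\int_I b_1[K-K]\,d\mu_n\Bigr|^2\right)^{1/2}\lesssim\frac{|y_2-c_{J_1}|^\beta\mu_n(I)^{1/2}}{D_2(x_2,t_2,y_2)},
$$
writing $D_2(x_2,t_2,y_2):=t_2^\beta\lambda_m(x_2,t_2)+|x_2-y_2|^\beta\lambda_m(x_2,|x_2-y_2|)$. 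For $k\geq1$ I use the pointwise Mixed Hölder-size estimate (Assumption 2.3(3)) together with goodness of $I$: for $k>r$, goodness gives $|x_1-y_1|\gtrsim R_k:=2^{(k-1)(1-\gamma_n)}\ell(I)$, and the calibration $\gamma_n=\alpha/[2(d_{\lambda_n}+\alpha)]$ is exactly what makes $\mu_n(I^{(k)})/[R_k^\alpha\lambda_n(x_1,R_k)]\lesssim 2^{-\alpha k/2}/\ell(I)^\alpha$, mirroring Lemma \ref{Lemma-2}. Squaring the pointwise bound $\int_{I^{(k)}\setminus I^{(k-1)}}|b_1||K-K|\,d\mu_n\lesssim t_1^\alpha|y_2-c_{J_1}|^\beta 2^{-\alpha k/2}/[\ell(I)^\alpha D_2]$, integrating over $\widehat{I}$, and using $\int_0^{\ell(I)} t_1^{2\alpha-1}\,dt_1\simeq\ell(I)^{2\alpha}$ cancels the $\ell(I)^{\pm\alpha}$ factors and yields $|y_2-c_{J_1}|^\beta\mu_n(I)^{1/2}\,2^{-\alpha k/2}/D_2$, which sums geometrically in $k$ (the finitely many $k\leq r$ for which goodness is void are absorbed by invoking Assumption 2.4(2) on $I^{(r)}$). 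Thus $G(y_2)\lesssim|y_2-c_{J_1}|^\beta\mu_n(I)^{1/2}/D_2$.

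For the final assembly, I integrate $|\phi_2(y_2)|G(y_2)$ over $J_1$ using $|y_2-c_{J_1}|\leq\ell(J_1)/2$, the bound $\|\phi_2\|_{L^1(\mu_m)}\lesssim\mu_m(J_1)^{1/2}$ from property (3), and the lower bound
$$
D_2(x_2,t_2,y_2)\gtrsim D(J_1,J_2)^\beta\sup_{z\in J_1\cup J_2}\lambda_m(z,D(J_1,J_2)),
$$
valid for $x_2\in J_2$, $t_2\in(\ell(J_2)/2,\ell(J_2)]$, $y_2\in J_1$ by a routine case split on whether $\ell(J_2)$ or $d(J_1,J_2)$ dominates $D(J_1,J_2)$, combined with the symmetry property of $\lambda_m$. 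Using the trivial $\ell(J_1)^\beta\leq\ell(J_1)^{\beta/2}\ell(J_2)^{\beta/2}$ (licit since $\ell(J_1)<\ell(J_2)$), the resulting prefactor is precisely $A_{J_1J_2}\mu_m(J_2)^{-1/2}\mu_n(I)^{1/2}$; squaring gives the desired inequality.

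The main obstacle is the tail $y_1\notin I$: a naive iteration of Assumption 2.4(2) on $I^{(k)}$ introduces $\mu_n(I^{(k)})^{1/2}$ in every term and the ensuing series diverges, while a crude pointwise estimate forfeits the $t_1^{2\alpha-1}$ integrability on $\widehat{I}$. The resolution is to extract the $t_1^\alpha$ factor from Assumption 2.3(3) explicitly for integrability on $\widehat I$, and to balance the annular-measure growth $\mu_n(I^{(k)})/\lambda_n(x_1,R_k)\lesssim 2^{k\gamma_n d_{\lambda_n}}$ against the Hölder decay $R_k^{-\alpha}\simeq 2^{-k(1-\gamma_n)\alpha}/\ell(I)^\alpha$ using precisely the good cube parameter $\gamma_n$ fixed in Section~3; verifying that these exponents combine to $-\alpha/2$ is the crux of the argument.
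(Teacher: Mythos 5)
Your overall architecture (collapse to the Carleson box, exploit the vanishing moment of $b_2\psi^{b_2}_{J_1}$ via Minkowski, treat $y_1$ near $I$ with the mixed Carleson--H\"older estimate and $y_1$ far from $I$ with the pointwise mixed H\"older--size estimate, then harvest the $t_1^\alpha$ factor by integrating over $\widehat I$) is the right skeleton and matches the paper's. The crucial divergence is in how you treat the tail $y_1\notin I$, and there you have a genuine gap.

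You decompose the tail into dyadic annuli $I^{(k)}\setminus I^{(k-1)}$ and, for $k>r$, invoke goodness of $I$ to get the lower bound $|x_1-y_1|\gtrsim 2^{(k-1)(1-\gamma_n)}\ell(I)$ and hence the geometric decay $2^{-\alpha k/2}$. But Lemma~\ref{Lemma-4} is asserted for every $I\in\Dn$, not only good cubes, and this is essential: in the estimate of $\Sigma_{Car,<}$ the sequence $\{a_I\}$ is fed into the dyadic Carleson embedding theorem (Lemma~\ref{Carleson theorem}), which requires the packing bound $\sum_{I'\subset I}a_{I'}\lesssim\nu(I)$ for \emph{every} $I\in\Dn$, good or bad. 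For a bad $I$, a point $y_1\in I^{(1)}\setminus I$ can be arbitrarily close to $x_1\in I$, so no positive lower bound on $|x_1-y_1|$ and hence no geometric decay is available; the annulus integral is then only $\lesssim 1$, carries no $t_1^\alpha\ell(I)^{-\alpha}$ factor, and its square is not integrable against $\tfrac{dt_1}{t_1}$ on $\widehat I$. Your suggested repair for the finitely many small $k$ --- applying Assumption 2.4(2) to $I^{(r)}$ --- produces $\mu_n(I^{(r)})^{1/2}$, which cannot be replaced by $\mu_n(I)^{1/2}$ since the measure $\mu_n$ is only upper doubling (there is no reverse doubling).

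The paper avoids both problems by splitting $b_1=b_1\mathbf{1}_{3I}+b_1\mathbf{1}_{(3I)^c}$ rather than at $\partial I$. For $y_1\in(3I)^c$ and $x_1\in I$ the separation $|x_1-y_1|\gtrsim\ell(I)$ is automatic (no goodness), and the crude estimate
$$
t_1^\alpha\int_{(3I)^c}\frac{|y_1-c_I|^{-\alpha}}{\lambda_n(c_I,|y_1-c_I|)}\,d\mu_n(y_1)\lesssim t_1^\alpha\ell(I)^{-\alpha}
$$
already suffices once squared and integrated over $\widehat I$; the annular decomposition and the calibration of $\gamma_n$ are not needed at all for this lemma (they belong to the good-cube estimates such as $\Sigma_{mod}$). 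Replacing your annulus-by-annulus argument by this single $(3I)^c$ estimate eliminates the goodness hypothesis and makes the proof match the paper's.
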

\begin{proof}
We are in the position of showing $\{ a_I \}_{I \in \Dn}$ is a Carleson sequence. Indeed, we have
\begin{align*}
\sum_{I': I' \subset I} a_{I'}
&=\iint_{\widehat{I}} \Big|\Theta_{t_1,t_2} (b_1 \otimes (b_2 \psi^{b_2}_{J_1}))(x) \Big|^2 d\mu_n(x_1) \frac{dt_1}{t_1} \\
&\leq \iint_{\widehat{3I}} \Big|\Theta_{t_1,t_2} ((b_1 \mathbf{1}_{3I}) \otimes (b_2 \psi^{b_2}_{J_1}))(x) \Big|^2 d\mu_n(x_1) \frac{dt_1}{t_1} \\
&\quad + \iint_{\widehat{I}} \Big|\Theta_{t_1,t_2} ((b_1 \mathbf{1}_{(3I)^c}) \otimes (b_2 \psi^{b_2}_{J_1}))(x) \Big|^2 d\mu_n(x_1) \frac{dt_1}{t_1} \\
&:= \mathfrak{T}_1 + \mathfrak{T}_2.
\end{align*}
Combining cancellation property, the Minkowski inequality, with the mixed Carleson and the H\"{o}lder condition, we get
\begin{align*}
\mathfrak{T}_1^{1/2}
& \leq \int_{J_1} |\psi^{b_2}_{J_1}(y_2)|\bigg( \iint_{\widehat{3I}} \bigg| \int_{3I} b_1(y_1) \big[K_{t_1,t_2}(x,y) \\&\quad- K_{t_1,y_2}(x,(y_1,c_{J_1})) \big] d\mu_m(y_1) \bigg|^2 d\mu_n(x_1)\frac{dt_1}{t_1} \bigg)^{1/2} d\mu_m(y_2) \\
&\lesssim \mu_m(J_1)^{-1/2} \mu_n(I)^{1/2} \mathscr{F}_{m,\beta}(J_1,x_2,t_2) \\
&\lesssim A_{J_1 J_2} \mu_m(J_2)^{-1/2} \mu_n(I)^{1/2}.
\end{align*}
As for the second part, the size condition implies that
\begin{align*}
\big|\Theta_{t_1,t_2} ((b_1 \mathbf{1}_{(3I)^c}) \otimes (b_2 \psi^{b_2}_{J_1}))(x) \big|
& \lesssim  t_1^\alpha \int_{(3I)^c} \frac{|y_1- x_1|^{-\alpha}}{\lambda_n(x_1,|y_1 - x_1|)} d\mu_n(y_1) \cdot A_{J_1 J_2} \mu_m(J_2)^{-1/2} \\
& \lesssim t_1^\alpha \int_{(3I)^c} \frac{|y_1- c_I|^{-\alpha}}{\lambda_n(x_1,|y_1 - c_I|)} d\mu_n(y_1) \cdot A_{J_1 J_2} \mu_m(J_2)^{-1/2} \\
& \lesssim  t_1^\alpha \ell(I)^{-\alpha} A_{J_1 J_2} \mu_m(J_2)^{-1/2}.
\end{align*}
This indicates that
$$
\mathfrak{T}_2
\lesssim \big(A_{J_1 J_2} \mu_m(J_2)^{-1/2}\big)^2 \mu_n(I) \cdot \ell(I)^{-2\alpha} \int_{0}^{\ell(I)} t_1^{2 \alpha} \frac{dt_1}{t_1}
\lesssim \big(A_{J_1 J_2} \mu_m(J_2)^{-1/2}\big)^2 \mu_n(I).
$$
Therefore, one obtains the desired result $(\ref{Car-1})$.

\end{proof}
\begin{lemma}\label{Lemma-5}
Let $k \geq 1$ and $I \in \Dn$ be a good cube. We have the following Carleson estimate :
\begin{equation}\label{Car-2}
\sum_{J' : J' \subset J} a_{J'} \lesssim 2^{-\alpha k} \mu_n(I^{(k)})^{-1} \mu_m(J) .
\end{equation}
where
$$
a_J := \iint_{W_{J}}
\big| \Theta_{t_1,t_2}((b_1 \xi_I^k) \otimes b_2)(x) \big|^2 d\mu_m(x_2) \frac{dt_2}{t_2}.
$$
\end{lemma}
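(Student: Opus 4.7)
The plan is to mirror the argument used for Lemma \ref{Lemma-4}, with the roles of the two parameters exchanged. The decay-bearing piece in the tensor is now the first factor $b_1 \xi_I^k$, whose support lies near $I^{(k)}$ and which is kept away from $I$ by the goodness hypothesis on $I$; the second factor $b_2$ carries no cancellation at all. Accordingly, I would apply the Carleson-size estimate from Assumption 2.4(1) in the second variable and extract the $2^{-\alpha k}$ decay from the first variable through Lemma \ref{Lemma-2}.

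First I would collapse the Carleson sum into an integral over the Carleson box,
\[
\sum_{J' \subset J} a_{J'} = \iint_{\widehat{J}} \big| \Theta_{t_1,t_2}((b_1 \xi_I^k) \otimes b_2)(x) \big|^2 d\mu_m(x_2) \frac{dt_2}{t_2},
\]
and then split $b_2 = b_2 \mathbf{1}_{3J} + b_2 \mathbf{1}_{(3J)^c}$ to produce two pieces $\mathfrak{S}_1$ and $\mathfrak{S}_2$.

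For $\mathfrak{S}_1$, I would enlarge the outer box from $\widehat{J}$ to $\widehat{3J}$, apply Minkowski in the $y_1$ variable to pull $|b_1(y_1)\xi_I^k(y_1)|$ outside the $L^2(d\mu_m(x_2)\frac{dt_2}{t_2})$-norm, and then invoke the second Carleson-size estimate in Assumption 2.4(1) applied to $3J$. This bounds the inner norm by $\frac{t_1^\alpha \mu_m(J)^{1/2}}{t_1^\alpha \lambda_n(x_1,t_1) + |x_1-y_1|^\alpha \lambda_n(x_1,|x_1-y_1|)}$. Integrating the resulting expression against $|b_1 \xi_I^k(y_1)|\,d\mu_n(y_1)$ and invoking Lemma \ref{Lemma-2} to extract the geometric decay $2^{-\alpha k/2}$, combined with the $\mu_n(I^{(k)})^{-1/2}$ normalization built into $\xi_I^k$, delivers the desired bound $2^{-\alpha k} \mu_n(I^{(k)})^{-1} \mu_m(J)$ after squaring.

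For $\mathfrak{S}_2$, since any $y_2 \in (3J)^c$ is separated from $x_2 \in J$ by at least $\ell(J)$, I would apply the size estimate from Assumption 2.3(1) pointwise. The resulting double integral factorizes: the $y_1$-piece is again handled by Lemma \ref{Lemma-2} and the normalization of $\xi_I^k$ to yield $2^{-\alpha k/2}\mu_n(I^{(k)})^{-1/2}$, while the $y_2$-piece contributes $\int_{(3J)^c} t_2^\beta |x_2-y_2|^{-\beta}\lambda_m(x_2,|x_2-y_2|)^{-1}\,d\mu_m(y_2) \lesssim t_2^\beta \ell(J)^{-\beta}$. Squaring and integrating in $(x_2,t_2) \in \widehat{J}$, the $t_2$-integral $\int_0^{\ell(J)} t_2^{2\beta}\frac{dt_2}{t_2} \simeq \ell(J)^{2\beta}$ exactly cancels $\ell(J)^{-2\beta}$, and the $x_2$-integration over $J$ contributes a clean $\mu_m(J)$.

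The main technical obstacle is to match decay factors and normalizations with the correct exponents: the squared estimate must produce $2^{-\alpha k}$ (not $2^{-\alpha k/2}$) and $\mu_n(I^{(k)})^{-1}$ (not $\mu_n(I^{(k)})^{-1/2}$). This forces careful use of Lemma \ref{Lemma-2} in both pieces and essential use of the goodness of $I$ to guarantee $d(I,(I^{(k-1)})^c) \gtrsim 2^{k/2}\ell(I)$, so that the kernel denominator yields the dyadic decay rather than degenerating. Once the normalizations are tracked through, the remainder is a direct analogue of Lemma \ref{Lemma-4}.
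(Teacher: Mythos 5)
Your proposal follows essentially the same route as the paper's own proof: collapse the Carleson sum to the integral over $\widehat{J}$, split $b_2 = b_2\mathbf{1}_{3J} + b_2\mathbf{1}_{(3J)^c}$, handle the local piece by Minkowski in $y_1$ plus the second Carleson-size estimate of Assumption 2.4(1) together with Lemma~\ref{Lemma-2} and the $L^\infty$ normalization of $\xi_I^k$, and handle the far piece by the pointwise size estimate with the $t_2$-integral over $(0,\ell(J)]$ cancelling the $\ell(J)^{-2\beta}$. The bookkeeping of the decay factor $2^{-\alpha k/2}$ (squared to $2^{-\alpha k}$) and the $\mu_n(I^{(k)})^{-1/2}$ normalization is handled exactly as in the paper, so the argument is correct and there is nothing substantive to add.
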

\begin{proof}Note that
\begin{align*}
\sum_{J': J' \subset J} a_{J'}
&=\iint_{\widehat{J}} \big| \Theta_{t_1,t_2}((b_1 \xi_I^k) \otimes b_2)(x) \big|^2 d\mu_m(x_2) \frac{dt_2}{t_2} \\
&\leq \iint_{\widehat{3J}} \big| \Theta_{t_1,t_2}((b_1 \xi_I^k) \otimes (b_2 \mathbf{1}_{3J})(x) \big|^2 d\mu_m(x_2) \frac{dt_2}{t_2} \\
&\quad + \iint_{\widehat{J}} \big| \Theta_{t_1,t_2}((b_1 \xi_I^k) \otimes (b_2 \mathbf{1}_{(3J)^c})(x) \big|^2 d\mu_m(x_2) \frac{dt_2}{t_2} \\
&:= \mathfrak{R}_1 + \mathfrak{R}_2.
\end{align*}
The mixed Carleson and size condition yield that
\begin{align*}
\mathfrak{R}_1^{1/2}
&\leq \int_{(I^{(k-1)})^c} |\xi_I^k(y_1)| \bigg(\iint_{\widehat{3J}} \bigg| \int_{3J} b_2(y_2) K_{t_1,t_2}(x,y) d\mu_m(y_2) \bigg|^2 d\mu_n(x_1) dt_1 \bigg)^{1/2} d\mu_n(y_1)\\
&\lesssim \mu_n(I^{(k)})^{-1/2} \mu_m(J)^{1/2} \mathfrak{F}_k(x_1)
\lesssim 2^{-\alpha k/2}\mu_n(I^{(k)})^{-1/2} \mu_m(J)^{1/2}.
\end{align*}
An easy consequence of the size condition is that
\begin{align*}
\big| \Theta_{t_1,t_2}((b_1 \xi_I^k) \otimes (b_2 \mathbf{1}_{(3J)^c})(x) \big|
&\lesssim \mu_n(I^{(k)})^{-1/2} \mathfrak{F}_k(x_1) \cdot t_2^\beta \int_{(3J)^c} \frac{|y_2- x_2|^{-\beta}}{\lambda_m(x_2,|y_2 - x_2|)} d\mu_m(y_2) \\
&\lesssim 2^{-\alpha k/2}\mu_n(I^{(k)})^{-1/2} \cdot t_2^\beta \int_{J^c} \frac{|y_2- c_J|^{-\beta}}{\lambda_m(x_2,|y_2 - c_J|)} d\mu_m(y_2) \\
&\lesssim 2^{-\alpha k/2}\mu_n(I^{(k)})^{-1/2} \cdot t_2^\beta \ell(J)^{-\beta}.
\end{align*}
It immediately lead to the following estimate:
$$
\mathfrak{R}_2 \lesssim 2^{-\alpha k}\mu_n(I^{(k)})^{-1} \mu_m(J).
$$
Hence, the inequality $(\ref{Car-2})$ has been proved.
\end{proof}
Finally, we present a dyadic Carleson embedding theorem, which was proved in \cite{NTV1997}.
\begin{lemma}[Dyadic Carleson Embedding Theorem]\label{Carleson theorem}
If the numbers $a_Q \geq 0$, $Q \in \D$, satisfy the following Carleson measure condition
$$
\sum_{Q' \subset Q} a_{Q'} \leq \nu(Q), \ \text{for each } Q \in \D,
$$
then for any $f \in L^2(\nu)$
$$
\sum_{Q \in \D} a_Q | \langle f \rangle_Q^{\nu}|^2 \leq 4 ||f||_{L^2(\nu)}.
$$
\end{lemma}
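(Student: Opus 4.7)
My approach is to reduce the claim to the $L^2$ bound for the dyadic $\nu$-maximal function $M_\nu^d f(x) := \sup_{Q \ni x,\, Q \in \D} |\langle f \rangle_Q^\nu|$ via a layer-cake decomposition. The starting point is the identity
$$
\sum_{Q \in \D} a_Q |\langle f \rangle_Q^\nu|^2 = \int_0^\infty 2\lambda \sum_{\substack{Q \in \D \\ |\langle f \rangle_Q^\nu| > \lambda}} a_Q \, d\lambda,
$$
obtained by writing $|\langle f \rangle_Q^\nu|^2 = \int_0^{|\langle f \rangle_Q^\nu|^2} dt$ and changing variables $t = \lambda^2$. The task is therefore to control the inner sum uniformly in $\lambda$ by $\nu\{M_\nu^d f > \lambda\}$.

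For each fixed $\lambda > 0$, I select the family $\{Q_j^\lambda\} \subset \D$ of maximal dyadic cubes on which $|\langle f \rangle_Q^\nu| > \lambda$. These cubes are pairwise disjoint and their union is precisely $E_\lambda := \{M_\nu^d f > \lambda\}$. Since any $Q \in \D$ contributing to the inner sum is contained in some $Q_j^\lambda$, the Carleson hypothesis applied cube by cube gives
$$
\sum_{Q : |\langle f \rangle_Q^\nu| > \lambda} a_Q \leq \sum_j \sum_{Q \subset Q_j^\lambda} a_Q \leq \sum_j \nu(Q_j^\lambda) = \nu(E_\lambda).
$$
Substituting this bound back and applying the layer-cake identity in the reverse direction for $M_\nu^d f$ yields
$$
\sum_{Q \in \D} a_Q |\langle f \rangle_Q^\nu|^2 \leq \int_0^\infty 2\lambda\, \nu(E_\lambda)\, d\lambda = \|M_\nu^d f\|_{L^2(\nu)}^2.
$$

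The proof is closed by the standard $L^2(\nu)$ boundedness of the dyadic maximal operator, $\|M_\nu^d f\|_{L^2(\nu)} \leq 2 \|f\|_{L^2(\nu)}$ (Doob's inequality, valid for any Borel measure, with constant $p/(p-1) = 2$ at $p = 2$), which produces the claimed factor of $4$. The only real subtlety is the maximal-cube selection: cubes with $\nu(Q) = 0$ must be discarded since $\langle f \rangle_Q^\nu$ is undefined there (they contribute nothing in any case), and to guarantee existence of maximal elements one should first prove the estimate on each finite truncation $\D_N := \{Q \in \D : 2^{-N} \leq \ell(Q) \leq 2^N\}$ and then pass to the limit by monotone convergence. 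Everything else is bookkeeping; I expect no genuine obstacle beyond this standard truncation step.
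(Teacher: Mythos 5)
Your argument is correct and is the standard stopping-time proof of the dyadic Carleson embedding theorem; the paper itself does not reprove the lemma but simply cites Nazarov--Treil--Volberg (1997), where essentially the same argument (layer-cake reduction to maximal cubes, Carleson packing on each stopping cube, then Doob's $L^2$ inequality for the dyadic maximal function with constant $p'=2$) is given. One small remark: the right-hand side of the stated conclusion should read $4\|f\|_{L^2(\nu)}^2$, which is what your proof actually delivers, so the missing square is a typo in the paper, not an issue with your argument.
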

\section{The Case : $\ell(I_1) < \ell(I_2)$ and $\ell(J_1) < \ell(J_2)$}
Using the cancellation properties of the adapted Haar functions
$$\int_{\Rn} b_1 \varphi_{I_1}^{b_1} d\mu_n= \int_{\Rm} b_2 \psi_{J_1}^{b_2} d\mu_m= 0,$$ we can replace $K_{t_1,t_2}(x,y)$ by
$$
K_{t_1,t_2}(x,y) - K_{t_1,t_2}(x,(y_1,c_{J_1})) - K_{t_1,t_2}(x,(c_{I_1},y_2)) + K_{t_1,t_2}(x,(c_{I_1},c_{J_1})).
$$
By the full H\"{o}lder condition of the kernel $K_{t_1,t_2}$ and Lemma $\ref{Lemma-1}$, we have
\begin{align*}
\big|\Theta_{t_1,t_2} (b \cdot \varphi^{b_1}_{I_1} \otimes \psi^{b_2}_{J_1})(x)\big|
&\lesssim ||b||_{L^{\infty}(\mu)} ||\varphi^{b_1}_{I_1}||_{L^{\infty}(\mu_n)} ||\psi^{b_2}_{J_1}||_{L^{\infty}(\mu_m)}
\mathscr{F}_{n,\alpha}(I_1,x_1,t_1) \mathscr{F}_{m,\beta}(J_1,x_2,t_2) \\
&\lesssim A_{I_1 I_2} \mu_n(I_2)^{-1/2} \cdot A_{J_1 J_2} \mu_m(J_2)^{-1/2}.
\end{align*}
Therefore, from Lemma $\ref{A-I1-I2}$, it follows that
\begin{align*}
\Sigma_{<,<}
&\lesssim \sum_{J_2} \sum_{I_2} \Big( \sum_{I_1} A_{I_1 I_2} \sum_{J_1} A_{J_1 J_2} |f_{I_1 J_1}| \Big)^2 \\
&\lesssim \sum_{J_2} \sum_{I_1} \Big( \sum_{J_1} A_{J_1 J_2} |f_{I_1 J_1}|  \Big)^2 \\
&\lesssim \sum_{I_1} \sum_{J_1} |f_{I_1 J_1}|^2
\simeq \big\| f \big\|_{L^2(\mu)}^2 .
\end{align*}
\section{The Case : $\ell(I_1) \geq \ell(I_2)$ and $\ell(J_1) < \ell(J_2)$}
Since the mixed H\"{o}lder and size conditions and the mixed of Carleson and H\"{o}lder estimates are symmetric, the control of $\Sigma_{<,\geq}$ is completely symmetric with $\Sigma_{\geq,<}$. Thus we only focus on the domination of $\Sigma_{\geq,<}$.

In any case, we can carry out the splitting
$$ \sum_{\ell(I_1) \geq \ell(I_2)}
=\sum_{\substack{\ell(I_1) \geq \ell(I_2) \\ d(I_1,I_2) > \ell(I_2)^{\gamma_n} \ell(I_1)^{1-\gamma_n}}}
+\sum_{\substack{\ell(I_1) > 2^r \ell(I_2)\\d(I_1,I_2) \leq \ell(I_2)^{\gamma_n} \ell(I_1)^{1-\gamma_n}}}
+\sum_{\substack{\ell(I_2) \leq \ell(I_1) \leq 2^r \ell(I_2) \\ d(I_1,I_2) \leq \ell(I_2)^{\gamma_n}\ell(I_1)^{1-\gamma_n}}}. $$
By restricting $\Sigma_{\geq,<}$ to the above three summation conditions, we obtain corresponding three terms $\Sigma_{out,<}$, $\Sigma_{in}$ and $\Sigma_{near,<}$ respectively.
Thus, there holds
$$
\Sigma_{\geq,<} \lesssim \Sigma_{out,<} + \Sigma_{in,<} + \Sigma_{near,<}.
$$

We next shall treat the above three terms respectively.
\subsection{Separated $\Sigma_{out,<}$.}
We first claim that it must have in this case
\begin{equation}\label{d-D}
\frac{\ell(I_2)^\alpha}{d(I_1,I_2)^{\alpha} \lambda_n(x_1,d(I_1,I_2))}
\lesssim \frac{\ell(I_1)^{\alpha/2} \ell(I_2)^{\alpha/2}}{D(I_1,I_2)^{\alpha} \lambda_n(x_1,D(I_1,I_2))}.
\end{equation}
Indeed, if $d(I_1,I_2) \geq \ell(I_1)$, then $D(I_1,I_2) \simeq d(I_1,I_2)$. So, the inequality $(\ref{d-D})$ holds.
If $d(I_1,I_2) < \ell(I_1)$, then $D(I_1,I_2) \simeq \ell(I_1)$. The doubling condition of $\lambda_n(x_1,t)$ implies that
\begin{align*}
\lambda_n(x_1,\ell(I_1))
&=\lambda_n \big(x_1,(\ell(I_1)/\ell(I_2))^{\gamma_n} \ell(I_2)^{\gamma_n} \ell(I_1)^{1-\gamma_n} \big) \\
&\lesssim C_{\lambda_n}^{\log_2 (\ell(I_1)/\ell(I_2))^{\gamma_n}} \lambda_{n}\big(x_1,\ell(I_2)^{\gamma_n} \ell(I_1)^{1-\gamma_n} \big) \\
&= (\ell(I_1)/\ell(I_2))^{\gamma_n d_n} \lambda_{n}\big(x_1,\ell(I_2)^{\gamma_n} \ell(I_1)^{1-\gamma_n} \big).
\end{align*}
It is important to notice that $\gamma_n (d_{\lambda_n} + \alpha) = \alpha/2$ and $d(I_1,I_2) > \ell(I_2)^{\gamma_n} \ell(I_1)^{1-\gamma_n}$. Hence, one may conclude that
\begin{equation*}
\frac{\ell(I_2)^\alpha}{d(I_1,I_2)^{\alpha} \lambda_n(x_1,d(I_1,I_2))}
\lesssim \frac{\ell(I_2)^{\alpha/2}}{\ell(I_1)^{\alpha/2} \lambda_n(x_1,\ell(I_1))}
\lesssim \frac{\ell(I_1)^{\alpha/2} \ell(I_2)^{\alpha/2}}{D(I_1,I_2)^{\alpha} \lambda_n(x_1,D(I_1,I_2))}.
\end{equation*}
This shows the inequality $(\ref{d-D})$.

We continue the proof. By the cancellation property, we can change the kernel $K_{t_1,t_2}(x,y)$ to
$$
K_{t_1,t_2}(x,y)-K_{t_1,t_2}(x,(y_1,c_{J_1})).
$$
The mixed H\"{o}lder and size condition implies that
\begin{eqnarray}\label{Theta-b}
\big|\Theta_{t_1,t_2} (b \cdot \varphi^{b_1}_{I_1} \otimes \psi^{b_2}_{J_1})(x)\big|
&{}& \nonumber \lesssim \mu_n(I_1)^{-1/2} \mu_m(J_1)^{-1/2} \mathscr{F}_{m,\beta}(J_1,x_2,t_2) \\
&{}& \nonumber \ \times \int_{I_1} \frac{t_1^\alpha}{t_1^{\alpha} \lambda_n(x_1,t_1)+ |x_1 - y_1|^{\alpha} \lambda_n(x_1,|x_1 - y_1|)} d\mu_n(y_1) \\
&{}& \lesssim \frac{\ell(I_2)^\alpha \ \mu_n(I_1)^{1/2}}{\ell(I_2)^{\alpha} \lambda_n(x_1,\ell(I_2))+ d(I_1,I_2)^{\alpha} \lambda_n(x_1,d(I_1,I_2))}  A_{J_1 J_2} \mu_m(J_2)^{-1/2} \\
&{}& \nonumber  \lesssim  A_{I_1 I_2} \mu_n(I_2)^{-1/2} \cdot A_{J_1 J_2} \mu_m(J_2)^{-1/2}.
\end{eqnarray}
Accordingly, applying the similar argument as $\Sigma_{<,<}$, we have
$$
\Sigma_{sep,<} \lesssim \big\| f \big\|_{L^2(\mu)}^2.
$$
\subsection{Nearby $\Sigma_{near,<}$.}
The summation conditions $\ell(I_2) \leq \ell(I_1) \leq 2^r \ell(I_2)$ and $d(I_1,I_2) \leq \ell(I_2)^{\gamma_n} \ell(I_1)^{1- \gamma_n}$ indicate that $\ell(I_1) \simeq \ell(I_2) \simeq D(I_1,I_2)$. Thus, for convenience, we write $I_1 \simeq I_2$ in this case. There holds that
\begin{equation}\label{Sim-Sim}
\frac{\mu_n(I_1)^{1/2}}{\lambda_n(x_1,\ell(I_2))}
\simeq \frac{\mu_n(I_1)^{1/2}}{\lambda_n(c_{I_1},\ell(I_1))^{1/2}} \lambda_n(c_{I_2},\ell(I_2))^{-1/2}
\leq \mu_n(I_2)^{-1/2}.
\end{equation}
It follows from $(\ref{Theta-b})$ and $(\ref{Sim-Sim})$ that
\begin{align*}
\big|\Theta_{t_1,t_2} (b \cdot \varphi^{b_1}_{I_1} \otimes \psi^{b_2}_{J_1})(x)\big|
\lesssim  \mu_n(I_2)^{-1/2} \cdot A_{J_1 J_2} \mu_m(J_2)^{-\frac12}.
\end{align*}
Notice that for a given $I_2$, there are finite cubes $I_1$ such that $I_1 \simeq I_2$. The same conclusion holds for a given $I_1$.
Therefore, we obtain that
\begin{align*}
\Sigma_{near,<}
&\lesssim \sum_{I_2} \sum_{J_2} \sum_{I_1:I_1 \simeq I_2} \Big(\sum_{J_1} A_{J_1 J_2} |f_{I_1 J_1}| \Big)^2  \\
&\lesssim \sum_{I_1} \sum_{J_2} \Big( \sum_{J_1} A_{J_1 J_2} |f_{I_1 J_1}|  \Big)^2 \sum_{I_2: I_2 \simeq I_1} 1\\
&\lesssim \sum_{I_1} \sum_{J_1} |f_{I_1 J_1}|^2
\simeq \big\| f \big\|_{L^2(\mu)}^2 .
\end{align*}
\subsection{Inside $\Sigma_{in,<}$.}
In this case, by the goodness of $I_2$, it must actually have $I_2 \subsetneq I_1$. We use $I^{(k)} \in \mathcal{D}_n$ to denote the unique cube for which $\ell(I^{(k)}) = 2^k \ell(I)$ and $I \subset I^{(k)}$. This enables us to write
\begin{align*}
\Sigma_{in,<}
= \sum_{I,J_2:good} \iint_{W_{J_2}} \iint_{W_{I}} \Big| \sum_{k=1}^\infty
\sum_{J_1:\ell(J_1)<\ell(J_2)} f_{I^{(k)} J_1} \Theta_{t_1,t_2} (b \cdot \varphi^{b_1}_{I^{(k)}} \otimes \psi^{b_2}_{J_1})(x) \Big|^2
d\mu_n(x_1) \frac{dt_1}{t_1} d\mu_m(x_2) \frac{dt_2}{t_2}.
\end{align*}

Introduce the notation
\begin{equation}\label{xi-I-k}
\xi_I^k = - \langle \varphi^{b_1}_{I^{(k)}} \rangle_{I^{(k-1)}} \mathbf{1}_{(I^{(k-1)})^c}
+ \sum_{\substack{I' \in ch(I^{(k)}) \\ I' \neq I^{(k-1)}}} \varphi^{b_1}_{I^{(k)}} \mathbf{1}_{I'}.
\end{equation}
It is easy to check that $\supp \xi_I^k \subset (I^{(k-1)})^c$, $||\xi_I^k||_{L^{\infty}(\mu_n)} \lesssim \mu_n(I^{(k)})^{-1/2}$, and
\begin{equation}\label{h-I-k}
\varphi^{b_1}_{I^{(k)}} = \xi_I^k + \langle \varphi^{b_1}_{I^{(k)}} \rangle_{I^{(k-1)}}.
\end{equation}
Denote $f_{J_1} = \langle f,\psi^{b_2}_{J_1} \rangle$ so that $f_{J_1}(y_1) = \int_{\R^m} f(y_1,y_2) \psi^{b_2}_{J_1}(y_2) d\mu_m(y_2)$, $y_1 \in \Rn$.

We then split
$$ \Sigma_{in,<} \lesssim \Sigma_{mod,<} + \Sigma_{Car,<} \ \ ,$$
where
\begin{align*}
\Sigma_{mod,<}
&= \sum_{I,J_2: good} \iint_{W_{J_2}} \iint_{W_{I}} \Big| \sum_{k=1}^\infty \sum_{J_1:\ell(J_1)<\ell(J_2)}
f_{I^{(k)} J_1} \Theta_{t_1,t_2} (b \cdot \xi_I^k \otimes \psi^{b_2}_{J_1})(x) \Big|^2 d\mu_n(x_1) \frac{dt_1}{t_1} d\mu_m(x_2) \frac{dt_2}{t_2}
\end{align*}
and
\begin{align*}
\Sigma_{Car,<}
= \sum_{I,J_2: good} \iint_{W_{J_2}} \iint_{W_{I}} \Big| \sum_{J_1:\ell(J_1)<\ell(J_2)} \langle f_{J_1} \rangle_{I} \Theta_{t_1,t_2} (b_1 \otimes (b_2 \psi^{b_2}_{J_1}))(x) \Big|^2 d\mu_n(x_1) \frac{dt_1}{t_1} d\mu_m(x_2) \frac{dt_2}{t_2}.
\end{align*}
The term $\Sigma_{Car,<}$ is obtained by the fact
$$
\langle f_{J_1} \rangle_{I}
= \sum_{k=1}^\infty f_{I^{(k)} J_1} \langle \varphi^{b_1}_{I^{(k)}} \rangle_{I^{(k-1)}}.
$$
\vspace{0.3cm}
\noindent\textbf{$\bullet$ Estimate of $\Sigma_{mod,<}$.}
Using the cancellation property again, we can change the kernel to $K_{t_1,t_2}(x,y)-K_{t_1,t_2}(x,(y_1,c_{J_1}))$. The mixed H\"{o}lder and size condition gives that
\begin{align*}
\big|\Theta_{t_1,t_2} (b \cdot \xi_I^k \otimes \psi^{b_2}_{J_1})(x) \big|
&\lesssim  \mu_n(I^{(k-1)})^{-1/2} \mathfrak{F}_k(x_1) \cdot A_{J_1 J_2} \mu_m(J_2)^{-1/2} \\
&\lesssim 2^{-\alpha k/2} \mu_n(I^{(k-1)})^{-1/2} \cdot A_{J_1 J_2} \mu_m(J_2)^{-1/2},
\end{align*}
provided by Lemma $\ref{Lemma-2}$.
Accordingly, from Minkowski's integral inequality and H\"{o}lder's inequality, we conclude that
\begin{align*}
\Sigma_{mod,<}
&\lesssim \sum_{I} \sum_{J_2} \bigg( \sum_{k=1}^\infty 2^{-\alpha k/2} \Big( \frac{\mu_n(I)}{\mu_n(I^{(k)})} \sum_{J_1:\ell(J_1) < \ell(J_2)} A_{J_1 J_2} |f_{I^{(k)} J_1}| \Big)^{1/2} \bigg)^2 \\
&\leq \bigg[ \sum_{k=1}^\infty 2^{-\alpha k/4} \cdot 2^{-\alpha k/4}\bigg( \sum_I \frac{\mu_n(I)}{\mu_n(I^{(k)})} \sum_{J_2} \Big( \sum_{J_1:\ell(J_1) < \ell(J_2)} A_{J_1 J_2} |f_{I^{(k)} J_1}| \Big)^2 \bigg)^{1/2} \bigg]^2 \\
&\lesssim \sum_{k=1}^\infty 2^{-\alpha k/2} \sum_I \frac{\mu_n(I)}{\mu_n(I^{(k)})} \sum_{J_2} \Big( \sum_{J_1:\ell(J_1) < \ell(J_2)} A_{J_1 J_2} |f_{I^{(k)} J_1}| \Big)^2 \\
&\lesssim \sum_{k=1}^\infty 2^{-\alpha k/2} \sum_{Q,J_1} |f_{Q J_1}|^2 \mu_n(Q)^{-1} \sum_{I:I^{(k)}=Q}\mu_n(I)
\lesssim \big\| f \big\|_{L^2(\mu)}^2 .
\end{align*}
\qed

\noindent\textbf{$\bullet$ Estimate of $\Sigma_{Car,<}$.}
By Minkowski's inequality, we have
$$
\mathcal{G}_{Car,<}
\leq \sum_{J_2} \iint_{W_{J_2}} \bigg(\sum_{J_1:\ell(J_1)<\ell(J_2)} \Big(\sum_{I} |\langle f_{J_1} \rangle_{I}|^2 a_I \Big)^{1/2} \bigg)^2 d\mu_m(x_2) \frac{dt_2}{t_2},
$$
where $a_I$ is defined as Lemma $\ref{Lemma-4}$.
Therefore, by Lemma $\ref{Lemma-4}$ and Carleson embedding theorem, we obtain the following estimate
\begin{align*}
\Sigma_{Car,<}
&\lesssim \sum_{J_2} \Big( \sum_{J_1} A_{J_1 J_2} \big\| f_{J_1} \big\|_{L^2(\mu_n)} \Big)^2
\lesssim \sum_{J_1} \big\| f_{J_1} \big\|_{L^2(\mu_n)}^2 \lesssim \big\| f \big\|_{L^2(\mu)}^2.
\end{align*}
So far, we have completed the estimate of $\Sigma_{\geq,<}$.

\qed
\section{The Case : $\ell(I_1) \geq \ell(I_2)$ and $\ell(J_1) \geq \ell(J_2)$.}
As we did above, the summation $\ell(I_1) \geq \ell(I_2)$ was split into three pieces.
A similar decomposition in the summation $\ell(J_1) \geq \ell(J_2)$ can be also performed. This leads to
\begin{align*}
\Sigma_{\geq,\geq}
&\lesssim \Sigma_{out,out} + \Sigma_{out,in} + \Sigma_{out,near} + \Sigma_{in,out} + \Sigma_{in,in} \\
&\quad  + \Sigma_{in,near} + \Sigma_{near,out}  + \Sigma_{near,in} + \Sigma_{near,near}.
\end{align*}
\subsection{Nested/Nested : $\Sigma_{out,out}$.}
We begin with the term $\Sigma_{out,out}$, where the new bi-parameter phenomena will appear. Using the similar decomposition to $(\ref{h-I-k})$, we can split the function
$\psi_{J}^{b_2}$ with $\eta_{J}^{i}$. Thus, it holds that
$$\Sigma_{nes,nes}
\lesssim \Sigma_{mod,mod} + \Sigma_{Car,Car} + \Sigma_{mod,Car} + \Sigma_{Car,mod} \ ,$$
where
\begin{align*}
\Sigma_{mod,mod}
&= \sum_{I,J: good} \iint_{W_{J}} \iint_{W_{I}} \Big| \sum_{k=1}^\infty \sum_{i=1}^\infty
f_{I^{(k)} J^{(i)}} \Theta_{t_1,t_2}(b \cdot \xi_I^k \otimes \eta_J^{i})(x) \Big|^2 d\mu_n(x_1) \frac{dt_1}{t_1} d\mu_m(x_2) \frac{dt_2}{t_2},
\end{align*}
\begin{align*}
\Sigma_{mod,Car}
&= \sum_{I,J: good} \iint_{W_{J}} \iint_{W_{I}} \Big| \sum_{k=1}^\infty
\langle f_{I^{(k)}} \rangle_J \Theta_{t_1,t_2}((b_1 \xi_I^k) \otimes b_2)(x) \Big|^2 d\mu_n(x_1) \frac{dt_1}{t_1} d\mu_m(x_2) \frac{dt_2}{t_2},
\end{align*}
\begin{align*}
\Sigma_{Car,mod}
&= \sum_{I,J: good} \iint_{W_{J}} \iint_{W_{I}} \Big| \sum_{i=1}^\infty
\langle f_{J^{(i)}} \rangle_I \Theta_{t_1,t_2}(b_1 \otimes (b_2 \eta_J^i))(x) \Big|^2  d\mu_n(x_1) \frac{dt_1}{t_1} d\mu_m(x_2) \frac{dt_2}{t_2},
\end{align*}
and
\begin{align*}
\Sigma_{Car,Car}
&= \sum_{I,J: good} |\langle f \rangle_{I \times J}|^2 \iint_{W_{J}} \iint_{W_{I}} \iint_{\R^{n+m}} | \Theta_{t_1,t_2} b(x) |^2 d\mu_n(x_1) \frac{dt_1}{t_1}
d\mu_m(x_2) \frac{dt_2}{t_2}.
\end{align*}
\subsubsection{\bf{Estimate of} $\Sigma_{mod,mod}$.}
The size condition leads to the bound
$$
\big|\Theta_{t_1,t_2}(b \cdot \xi_I^k \otimes \eta_J^{i})(x)\big|
\lesssim ||\xi_I^k||_{L^{\infty}(\mu_n)} ||\eta_J^{i}||_{L^{\infty}(\mu_m)} \mathfrak{F}_k(x_1) \mathfrak{G}_i(x_2),
$$
where
$$
\mathfrak{G}_i(x_2)
:= \int_{(J^{(i-1)})^c} \frac{t_2^{\beta}}{t_2^{\beta} \lambda_m(x_2,t_2)+ |x_2 - y_2|^{\beta} \lambda_m(x_2,|x_2 - y_2|)} d\mu_m(y_2).
$$
Using the standard estimates as $\mathfrak{F}_k(x_1)$, we can get
$$
\mathfrak{G}_i(x_2) \lesssim 2^{-\beta i}.
$$
Therefore, there holds
\begin{align*}
\big|\Theta_{t_1,t_2}(b \cdot \xi_I^k \otimes \eta_J^{i})(x)\big|
\lesssim 2^{-\alpha k/2} \mu_n(I^{(k)})^{-1/2} \cdot 2^{-\beta i} \mu_m(J^{(i)})^{-1/2}.
\end{align*}
Applying the similar estimate to $\Sigma_{mod,<}$ to analyze $\Sigma_{mod,mod}$, we deduce that
\begin{align*}
\Sigma_{mod,mod}
&\lesssim \sum_{k,i} 2^{-\alpha k/2} 2^{-\beta i} \sum_{Q,R}|f_{QR}|^2 \frac{1}{|Q|} \sum_{I:I^{(k)}=Q} |I| \cdot \frac{1}{|R|} \sum_{J:J^{(i)}=R} |J|
\lesssim \big\| f \big\|_{L^2(\mu)}^2 .
\end{align*}
\qed
\subsubsection{\bf{Estimate of} $\Sigma_{mod,Car}$ and $\Sigma_{Car,mod}$.}
Now we turn our attention to dominate $\Sigma_{mod,Car}$. From the Minkowski inequality, it follows that
$$
\Sigma_{mod,Car}
\leq \sum_{I: good} \iint_{W_{I}} \bigg[ \sum_{k=1}^\infty \bigg( \sum_{J} | \langle f_{I^{(k)}} \rangle_J |^2 a_J \bigg)^{1/2} \bigg]^2 d\mu_n(x_1) \frac{dt_1}{t_1}.
$$
Then Carleson embedding theorem indicates that
$$
\Sigma_{mod,Car}
\lesssim \sum_{i=1}^\infty 2^{-\beta i/2} \sum_{R} \big\| f_R \big\|_{L^2(\mu_n)}^2 \frac{1}{|R|} \sum_{J:J^{(i)=R}}|I|
\lesssim \big\| f \big\|_{L^2(\mu)}^2 .
$$
\qed
\subsubsection{\bf{Estimate of} $\Sigma_{Car,Car}$.}
Applying the bi-parameter Carleson condition, it immediately yields that
\begin{align*}
\Sigma_{Car,Car}
&=\sum_{I,J}|\langle f \rangle_{I \times J}|^2 C_{I J}^b
= 2 \int_{0}^{\infty} \sum_{\substack{I,J \\ |\langle f \rangle_{I \times J}| > t}} C_{I J}^{b}\ t \ dt \\
&\lesssim \int_{0}^{\infty} \sum_{\substack{I,J \\ I \times J \subset \{M_s^{\mathcal{D}} f > t\}}} C_{I J}^{b}\ t \ dt
\lesssim \int_{0}^{\infty} \mu \big(\{M_s^{\mathcal{D}} f > t\} \big) t \ dt \\
&\lesssim \big\| M_s^{\mathcal{D}} f \big\|_{L^2(\mu)}^2
\lesssim \big\| f \big\|_{L^2(\mu)}^2,
\end{align*}
where in the last step we have used the $L^p(\mu)(1<p<\infty)$ boundedness of the strong maximal function associated with rectangles.
\qed
\subsection{The rest of terms.}
As for the estimates of the remaining terms, they are simply combinations of the techniques we have used above. Thereby, we here only present certain key points.

Applying the size condition or the mixed H\"{o}lder and size condition, we can dominate the terms $\Sigma_{out,out}$, $\Sigma_{out,near}$, $\Sigma_{near,near}$ and $\Sigma_{near,out}$ directly. For the terms $\Sigma_{in,out}$ and $\Sigma_{in,near}$, $nes$, they can be split into $mod$ and $Car$. To bound them, it suffices to use the size condition and the combinations of Carlson and size estimate. The terms $\Sigma_{out,in}$ and $\Sigma_{near,in}$ are symmetric with respect to them, respectively.

\qed

\end{document}